\newtheorem{theorem}{Theorem}
\newtheorem{lemma}{Lemma}
\theoremstyle{definition}
\newtheorem{definition}{Definition}
\theoremstyle{remark}
\newtheorem{remark}{\bf {Remark}}
\numberwithin{equation}{section}
\newcommand{\Nset}{\mathbb{N}}
\newcommand{\Rset}{\mathbb{R}}
\title{On the Baire space $D^{\kappa}$ having $\omega_1$-strongly compact weight.}
\author{Ana S. Mero\~no }
\begin{document}
\maketitle 

\bigskip

{\it Abstract}. We prove that on the Baire space $(D^{\kappa},\pi)$, $\kappa \geq \omega_0$ where $D$ is a uniformly discrete space having $\omega _1$-strongly compact cardinal and $\pi$ denotes the product uniformity on $D^\kappa$, there exists a $z_u$-filter $\mathcal{F}$ being Cauchy for the uniformity $e\pi$ having as a base all the countable uniform partitions of $(D^\kappa,\pi)$, and failing the countable intersection property.  This fact is equivalent to the existence of a non-vanishing real-valued  uniformly continuous function $f$ on $D^{\kappa}$ for which the inverse function $g=1/f$ cannot be continuously extended to the completion of  $(D^{\kappa _0},e\pi)$. This does not happen when the cardinal of $D$ is strictly smaller than the first Ulam-measurable cardinal.
 
\bigskip

\Large

\begin{section}{Introduction}

Given a uniform space $(X,\mu)$, consider the completion of $X$ endowed with the weak uniformity $wU_\mu(X)$ induced by all the real-valued uniformly continuous functions on $(X,\mu)$ (see \cite{willard}). The topological space obtained in this completion is a realcompactification of $X$. More precisely, it is the smallest realcompactification of $(X,\mu)$, in the usual order of realcompactifications (\cite{engelking.realcompact}), such that every real-valued uniformly continuous function $f\in U_\mu (X)$ can be continuously (and uniquely) extended to it. 

We denote this realcompactification by $H(U_\mu (X))$, following  \cite{merono.Samuel.uniform}, where it is called the  {\it Samuel realcompactification} of $(X,\mu)$ since it is defined by means of the family of all the real-valued uniformly continuous functions in parallel to the {\it Samuel compactification} $s_\mu X$ (\cite{samuel}), which is the compactification of $(X,\mu)$ obtained by doing the completion of $(X, wU^{*}(X))$, where  $wU^{*}(X)$ is the weak uniformity induced by all the bounded real-valued uniformly continuous functions on $(X,\mu)$. The Samuel realcompaction has been well-studied in \cite{reynolds}, \cite{merono.Samuel.uniform} and \cite{husek}, where the uniform spaces $(X,\mu)$ being {\it Samuel realcompact}, that is, satisfying that $X=H(U_\mu(X))$, are characterized.

In general, the Samuel realcompactification of a uniform space $(X,\mu)$ does not coincide with the well-known {\it Hewitt realcompactification} $\upsilon X$  induced by all the real-valued continuous functions on $X$ (see \cite{gillman}). The standard counterexamples are the closed unit ball of an infinite-dimensional separable Banach space and the metric hedgehog of countable weight $H(\omega _0)$ (\cite{engelkingbook}). Indeed, both spaces are realcompact because they are separable, that is, they coincide with their Hewitt realcompactification. On the other hand, they have the particularity that every real-valued uniformly continuous function on them is bounded and then, the Samuel realcompactification and the Samuel compactification coincide. Thus, the Samuel and the Hewitt realcompactifications are different in theses cases because otherwise  both examples would be compact, which is clearly false.
\medskip

Let us denote by $C( H(U_\mu(X)))$ the ring of all the real-valued continuous functions $f\in C(X)$ that can be continuously extended to the Samuel realcompactification $H(U_{\mu}(X))$ (\cite{hager-johnson}). The main objective of this paper is to better understand  this ring. For example, we can describe it as the family of all the real-valued continuous functions that map Cauchy filters of $(X,wU_{\mu}(X))$ to Cauchy filters of $(\Rset, d_u)$, where $d_u$ is the usual Euclidean metric on $\Rset$ (\cite{borsik}). But this kind of description does not tell us anything. 

More precisely, the question that we have is the following. We know, trivially, that all the real-valued uniformly continuous functions, as well as finite products of them, can be continuously extended to $H(U_\mu (X)))$. So, in this line, we ask which are the uniform spaces $(X,\mu)$ that satisfy that for every non-vanishing function $f\in U_\mu(X)$, that is, $f(x)\neq 0$ for every $x\in X$, the inverse function $g=1/f$ can also be continuously extended to $H(U_\mu (X))$. Observe that what we are really asking is  to know which  uniform spaces satisfy that their Samuel realcompactification $H(U_\mu (X))$ conincide with the $G_\delta$-{\it closure} of $X$ in its Samuel compactification $s_\mu X$ (see \cite{hager-johnson} and \cite{chekeev}). The $G_{\delta}$-closure of a uniform space in its Samuel compactification is also a realcompactification of the space which, a priori, does not coincide with the Samuel realcompactification or the Hewitt realcompactification (see \cite{curzer} and \cite{chekeev}). 

\medskip

In order to give an answer to the above problem we are going to study the particular case of the {\it Baire space} $D^{\kappa}$, $\kappa \geq \omega _0$. The Baire space space $D^{\kappa}$ is defined as the product of $\kappa$-many copies of a uniformly discrete space $D$. It is endowed with the product uniformity $\pi$  having as a base the uniform partitions $\{\{x\}\times D^{\kappa \backslash N}: x\in D^N\}$ where $N$ is any finite set of the ordinal set $\kappa=\{\alpha: \alpha <\kappa\}$. Observe that the Baire space $(D^\kappa, \pi)$ has the particularity that the weak uniformity $wU_{\pi}(D^{\kappa})$ needed to define the Samuel realcompactification coincides exactly with the uniformity $e\pi$ (see \cite{husek.pulga}) induced by all the uniform  partitions of  the form $\{A\times D^{\kappa \backslash N}:A\in \mathcal{A}\}$ where $\mathcal{A}$ is any countable partition of $D^N$ and $N$, as above, is a finite set of $\kappa=\{\alpha:\alpha <\kappa\}$.

By all the foregoing, our object of study, the Samuel realcompactification $H(U_{\pi}(D^{\kappa}))$ of $(D^{\kappa},\pi)$, is the topological space obtained in the completion of the uniform space $(D^{\kappa}, e\pi)$, and we want to determine if for every non-vanishing $f\in U_{\pi}(D^{\kappa})$, the inverse function $1/f$ can be continuously extended to this completion. Here, we are going to see that this problem depends  on the cardinal of  $D$, as usually results on realcompactifications do.
\medskip

First, recall that we know that the Samuel realcompactification $H(U_{\pi}(D^{\kappa}))$ coincides topologically with the original space $D^{\kappa}$ if and only if the cardinality of $D$ is {\it not  Ulam-measurable} (by \cite[Corollary 2.4]{reynolds}, or \cite[Theorem 1]{husek.pulga}). Therefore, in this case the answer is trivial and in order to have some interesting result we need to suppose that at least, the cardinal of $D$ is Ulam-measurable. In particular, we ask that the cardinal of $D$ is $\omega_1${\it-strongly compact}, even if other large-cardinal axioms could be possible, as we will explain later.

\begin{definition}  Let $\kappa\geq \omega _0$. A filter $\mathcal{F}$ satisfies the $\kappa$-{\it intersection property} if for every subfamily $\mathcal{E}\subset \mathcal{F}$ of cardinal $|\mathcal{E}|<\kappa$, then $\bigcap \mathcal{E}\neq \emptyset$. In addition, if $\bigcap \mathcal{E}\in \mathcal{F}$, we will say that $\mathcal{F}$ is $\kappa$-{\it complete}. 
\end{definition}

\noindent Clearly every filter is $\omega_0$-complete and every $\kappa$-complete filter satisfies the $\kappa$-intersection property, but not conversely. However, if an ultrafilter satisfies the $\kappa$-intersection property  then it is $\kappa$-complete.

\begin{definition} A cardinal $\kappa>\omega_0$ is {\it Ulam-measurable} if in any set of cardinal $\kappa$ contains a non-principal $\omega_1$-complete ultrafilter. It is
$\omega_1$-{\it strongly compact} if every $\kappa$-complete filter on any set $S$ can be extended to an $\omega_1$-complete ultrafilter on $S$.
\end{definition}

\noindent We will comment the implications of working with this kind of cardinals in the last section. Now, just telling that every $\omega_1$-strongly compact cardinal is Ulam-measurable. Hence we must assume their existence as a large-cardinal axiom of set-theory, as we do with Ulam-measurable cardinals, since we cannot prove it from ZFC (assuming the consistency of ZFC \cite{jech}). Moreover, if $\kappa$ is $\omega_1$-strongly measurable and $\lambda \geq \kappa$, then $\lambda$ is also $\omega _1$-strongly compact (see \cite{bagaria1}).

\medskip


Summarizing all the above, the purpose of this paper is to prove the following result.

\begin{theorem}\label{first} Let $D$ be a set of $\omega_1$-strongly compact cardinal and $\kappa \geq \omega _0$. Then there exists a non-vanishing real-valued uniformly continuous function $f$ on the Baire space $(D^{\kappa},\pi)$ such that the inverse function $g=1/f$ cannot be continuously extended to the Samuel realcompactification $H(U_\pi (D^{\kappa}))$.
\end{theorem}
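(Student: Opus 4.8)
The plan is first to translate the statement into a set-theoretic one. By the description of the ring $C(H(U_\pi(D^\kappa)))$ (cf. \cite{hager-johnson}, \cite{borsik}), a non-vanishing $f\in U_\pi(D^\kappa)$ has $g=1/f$ extendable continuously to $H(U_\pi(D^\kappa))$ if and only if the continuous extension $\widehat f$ of $f$ to the completion of $(D^\kappa,e\pi)$ is nowhere zero; and any $e\pi$-Cauchy filter $\mathcal F$ on $D^\kappa$ determines a point $p$ of that completion with $\widehat f(p)=\lim_{\mathcal F}f$. So it suffices to exhibit a non-vanishing $f\in U_\pi(D^\kappa)$ and an $e\pi$-Cauchy filter $\mathcal F$ with $f\to0$ along $\mathcal F$ --- equivalently, the $z_u$-filter promised in the abstract. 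I would also record the elementary remark that, since an $e\pi$-entourage is exactly one induced by a countable partition $\{A\times D^{\kappa\setminus N}:A\in\mathcal A\}$ with $N\subseteq\kappa$ finite, a filter $\mathcal F$ on $D^\kappa$ is $e\pi$-Cauchy if and only if, writing $p_N\colon D^\kappa\to D^N$ for the projection, the push-forward $(p_N)_*\mathcal F$ is an $\omega_1$-complete ultrafilter on $D^N$ for every finite $N\subseteq\kappa$.

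\textbf{The test function.} Identify $D$ with the cardinal $\lambda=|D|$ (so $D$ is well-ordered), fix the coordinates $\omega_0\subseteq\kappa$, and for $z\in D^\kappa$ let $m(z)$ be the largest $j<\omega_0$ with $z_0>z_1>\cdots>z_j$. Since $D$ has no infinite strictly decreasing sequence, $m(z)<\omega_0$, so $f(z):=2^{-m(z)}$ is strictly positive everywhere. A short computation shows $f\in U_\pi(D^\kappa)$: if $z,z'$ agree on $\{0,\dots,k\}$ then either $m(z),m(z')\geq k$, whence $|f(z)-f(z')|\leq2^{-k}$, or $m(z)=m(z')$. Moreover $\{f<2^{-k}\}=\{m>k\}=W_0\cap\cdots\cap W_k$, where $W_n:=\{z\in D^\kappa:z_{n+1}<z_n\}$ is a member of a countable uniform partition (a cylinder over $\{n,n+1\}$), and $\bigcap_k(W_0\cap\cdots\cap W_k)=\emptyset$. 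Hence $f\to0$ along a filter $\mathcal F$ exactly when $W_n\in\mathcal F$ for all $n<\omega_0$, and then $\mathcal F$ automatically fails the countable intersection property. So the theorem reduces to producing a filter $\mathcal F$ on $D^\kappa$ with $(p_N)_*\mathcal F$ an $\omega_1$-complete ultrafilter for each finite $N$ and $W_n\in\mathcal F$ for all $n$.

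\textbf{Construction of the filter.} I would build a coherent system $(\mathcal F_n)_{n<\omega_0}$ of $\omega_1$-complete ultrafilters $\mathcal F_n$ on $D^n$, each concentrated on the $n$-th level $T_n=\{s\in D^n:s_0>\cdots>s_{n-1}\}$ of the well-founded tree of finite strictly decreasing sequences, and coherent, i.e. $(q^n_{n-1})_*\mathcal F_n=\mathcal F_{n-1}$ for the projection $q^n_{n-1}\colon D^n\to D^{n-1}$. Then $\mathcal F:=\{X\subseteq D^\kappa:X\supseteq p_n^{-1}(A)\text{ for some }n,\ A\in\mathcal F_n\}$ is a proper filter; its push-forward to any finite $N$ is a push-forward of some $\mathcal F_n$, hence an $\omega_1$-complete ultrafilter; and $W_0\cap\cdots\cap W_{n-1}\supseteq p_n^{-1}(T_n)\in\mathcal F$, so all $W_n\in\mathcal F$. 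The inductive step, from $\mathcal F_n$ to $\mathcal F_{n+1}$, is where the large-cardinal hypothesis enters: I would realise $\mathcal F_{n+1}$ as the $\mathcal F_n$-indexed sum $\sum_{s\in D^n}\mathcal W_s$ (so $X\in\mathcal F_{n+1}$ iff $\{s:\{a:(s,a)\in X\}\in\mathcal W_s\}\in\mathcal F_n$) of $\omega_1$-complete ultrafilters $\mathcal W_s$ on $D$; such a sum is automatically an $\omega_1$-complete ultrafilter projecting onto $\mathcal F_n$, and it concentrates on $T_{n+1}$ as soon as $\mathcal W_s$ concentrates on $\{a\in D:a<s_{n-1}\}$ for $\mathcal F_n$-almost all $s$. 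To produce the $\mathcal W_s$ --- and, in tandem, to arrange that $\mathcal F_n$ concentrates on $s$ for which the admissible set of next coordinates is large enough to carry a non-principal $\omega_1$-complete ultrafilter (which is forced, since an everywhere-principal choice would yield an infinite strictly decreasing sequence in $D$) --- I would invoke the $\omega_1$-strong compactness of $\lambda$ to extend the relevant $\lambda$-complete filters (the co-$(<\lambda)$ filter on $D^{n+1}$ and on $D$, together with finitely many cylinder constraints bounding single coordinates) to $\omega_1$-complete ultrafilters.

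\textbf{Conclusion and the main obstacle.} Granting the construction, $\mathcal F$ is $e\pi$-Cauchy, all $W_n\in\mathcal F$, and $\bigcap_k(W_0\cap\cdots\cap W_k)=\emptyset$, so $\mathcal F$ fails the countable intersection property and $f\to0$ along it; hence $\widehat f$ vanishes at the corresponding point of $H(U_\pi(D^\kappa))$, and $g=1/f$ does not extend there. The delicate part is the inductive step: one must simultaneously keep each $\mathcal F_n$ an $\omega_1$-complete ultrafilter concentrated on $T_n$, preserve coherence along the projections, and control the cardinalities of the coordinate sets that have to support the section ultrafilters $\mathcal W_s$ --- it is exactly here that $\omega_1$-strong compactness is used, and exactly here that, when $|D|$ is below the first Ulam-measurable cardinal, $D$ carries no non-principal countably complete ultrafilter and the construction cannot even start, in agreement with the dichotomy announced in the abstract. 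I expect the bookkeeping that makes the well-foundedness of the tree (the absence of an infinite decreasing branch) responsible for the failure of countable intersections in the inverse-limit filter, while every finite approximation remains a genuine $\omega_1$-complete ultrafilter, to be the technical heart of the proof.
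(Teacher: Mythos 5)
Your first two paragraphs are sound and agree with the paper's own reduction: the equivalence you set up is Theorem \ref{Cauchy} together with the fact that a filter on $D^{\kappa}$ is $e\pi$-Cauchy precisely when all of its finite projections are $\omega_1$-complete ultrafilters, and your test function $f(z)=2^{-m(z)}$ is indeed a non-vanishing member of $U_\pi(D^{\kappa})$ with $\{f<2^{-k}\}=W_0\cap\dots\cap W_k$ and $\bigcap_n W_n=\emptyset$. The gap sits exactly where you place it, in the inductive step, and it is not mere bookkeeping: as described, that step provably cannot be carried out in a case the theorem must cover. You want $\mathcal F_{n+1}=\sum_{s}\mathcal W_s$ with each $\mathcal W_s$ an $\omega_1$-complete ultrafilter concentrated on $\{a\in D:a<s_{n-1}\}$, and you want these to be non-principal for $\mathcal F_n$-many $s$. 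But $\{a:a<s_{n-1}\}$ is a proper initial segment of the well-order, hence of cardinality strictly below $|D|$. In the model, invoked in the paper's final section, where the first $\omega_1$-strongly compact cardinal \emph{is} the first Ulam-measurable cardinal, take $|D|$ equal to that cardinal: then no set of size below $|D|$ carries a non-principal $\omega_1$-complete ultrafilter, so every $\mathcal W_s$ is forced to be principal, and your own parenthetical observation shows that an everywhere-principal choice produces, via the $\omega_1$-completeness of $\mathcal F_1$, a single $a\in D$ below which there is an infinite strictly decreasing sequence of ordinals. What you have actually shown is that your forward recursion is impossible there, not that non-principal sections must become available; relatedly, the co-$(<\lambda)$ filter on $D$ together with the constraint set $\{a:a<s_{n-1}\}$ does not even generate a proper filter, since the constraint set is small and its complement is co-small.

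The paper escapes this trap by never asking its ultrafilters to concentrate on the tree of decreasing sequences. Theorem \ref{matrix} builds, by a transfinite recursion of length $|D|$, sets $A_n\subseteq D^n$ whose iterated projections $B_n^{\alpha}(\mathcal A)$ form a $|D|$-complete filter base on each $D^n$ with $\bigcap_{\alpha<|D|}B_n^{\alpha}(\mathcal A)=\emptyset$ and $\bigcap_n p_n^{-1}(A_n)=\emptyset$; then $\omega_1$-strong compactness is applied once per coordinate in Theorem \ref{Domegametric}, exactly in the form ``every $|D|$-complete filter extends to an $\omega_1$-complete ultrafilter,'' and never to find countably complete ultrafilters on sets of size below $|D|$. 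If you want to keep your tree picture, the natural repair is to iterate in the \emph{opposite} order: fix a single $\omega_1$-complete ultrafilter $U$ on $D$ containing all final segments (extend the co-bounded filter), and declare $B\in\mathcal F_{n+1}$ iff for $U$-almost all $a$ the section $\{s\in D^n:(s,a)\in B\}$ lies in $\mathcal F_n$; then $T_{n+1}\in\mathcal F_{n+1}$ because $\{b:b>a\}\in U$ for every $a$, coherence under deleting the last coordinate is immediate, and no ultrafilter on a bounded initial segment is ever required. As written, however, the construction at the heart of your proposal does not go through.
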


Observe that, if we prove the above result, we are also proving that the porperty ``for a uniform space every inverse function of a non-vanishing real-valued uniformly function can be extended to its Samuel realcompactification'' is not productive. Indeed, a uniformly discrete space $D$ satisfies always this property, indistinctly of it is cardinal, because any continuous function on it is uniformly continuous. However, infinite products do not satisfy it whenever the cardinal of $D$ is $\omega _1$-strongly compact. Therefore, this result relates some topological/uniform object to the set-theoretic notion of $\omega_1$-strong compactness as in the line of \cite{bagaria1}, \cite{bagaria2} or \cite{usuba}. 

\end{section}
\medskip

\begin{section}{Basic facts}

In order to prove Theorem \ref{first}, instead of working with functions we are going to use a special kind of filters called {\it Cauchy $z_u$-filters}.

\begin{definition} A set $Z$ of a uniform space $(X,\mu)$ is a $z_u$-{\it set} if there exists some (bounded) real-valued uniformly continuous function $f$ such that $f^{-1}(\{0\})=Z$. 
\end{definition}

\noindent Clearly every $z_u$-set is a zero-set, but not conversely. On the other hand, in a metric space closed sets, zero-sets and $z_u$-sets are all the same. However, this is not in general true for uniform spaces.

Observe that if $Z$ is a $z_u$-set of a uniform space $(X,\mu)$ and $Y$ is a subspace of $Y$ then $Z\cap Y$ is a $z_u$-set of $(Y,\mu|_Y)$. Moreover, the sets of the form $f^{-1}([a,b])$, where $f\in U_\mu (X)$, are also examples of $z_u$-sets. Indeed consider the uniformly continuous function $h: (\Rset,d_u)\rightarrow (\Rset,d_u)$ defined by $h(x)=d_u(x,  [a,b])$. Then $h\circ f: (X,\mu)\rightarrow  (\Rset,d_u)$ is uniformly continuous and $f^{-1}([a,b])=(h\circ f)^{-1}(\{0\})$.

In the particular case of the Baire space $(D^\kappa, e\pi)$, the sets of the form   $A\times D^{\kappa\backslash N}$ where $A\subset D^N$ and $N$ is a finite set of $\kappa=\{\alpha:\alpha <\kappa\}$, are all $z_u$-sets.  Indeed let us denote by $p_N:(D^\kappa,\pi)\rightarrow (D^N,{\tt u})$ the projection of $D^\kappa$ onto the uniformly discrete space $(D^N,{\tt u})$. Then $p_N$ is a uniformly continuous map. Next, consider the uniformly continuous function $h:(D^{N},{\tt u})\rightarrow (\Rset, d_u)$ defined by $h(x)=d(x,  A)$. Then, $h\circ p_N: (D^\kappa,\pi)\rightarrow (\Rset, d_u)$ is uniformly continuous and $A\times D^{\kappa\backslash N}=(h\circ p_N)^{-1}(\{0\})$

Let us denote by $\mathcal{Z}_u(X)$ the family of all the $z_u$-filters of $(X,\mu)$.

\begin{definition} A filter $\mathcal{F}$ of a uniform space $(X,\mu)$ is a $z_u$-filter if $\mathcal{F}\cap \mathcal{Z}_u(X)$ is a base of $\mathcal{F}$. It is a $z_u$-{\it ultrafilter} if $\mathcal{F}\cap  \mathcal{Z}_u(X)$ is a maximal in $\mathcal{Z}_u(X)$.
\end{definition}

\noindent It follows from
Kuratowski-Zorn lemma that every $z_u$-filter is contained in a $z_u$-ultrafilter.

\begin{definition} A filter $\mathcal{F}$ of a uniform space $(X,\mu)$ is a {\it Cauchy filter} if for every uniform cover $\mathcal{U}\in \mu$ there is some $U\in \mathcal{U}$ such that $F\subset U$ for some $F\in \mathcal{F}$
\end{definition}

Cauchy $z_u$-filters are used in the completion of a uniform space. More precisely, if $(X,\mu)$ is a uniform space, a point in its completion $\xi$ is exactly the equivalence class induced by a {\it minimal Cauchy filter} of $(X,\mu)$ (see \cite{bourbaki}).

\begin{definition} A Cauchy filter $\mathcal{F}$ of a uniform space is {\it minimal} if does not exist a coarser Cauchy filter $\mathcal{G}\subsetneq \mathcal{F}$.

\end{definition} 

\noindent It particular, every Cauchy filter contains a unique minimal Cauchy filter (\cite{bourbaki}). Moreover, it can be shown that every minimal Cauchy filter is a Cauchy $z_u$-filter. This implies  the main fact that for every point $\xi$ in the completion of a uniform space $(X,\mu)$ there exists a Cauchy $z_u$-(ultra)filter in $(X,\mu)$ converging to $\xi$.

The next result shows us how to pass from the problem stated in terms of real-valued uniformly continuous functions to the problem with Cauchy $z_u$-filters.  

\begin{theorem}\label{Cauchy} For a uniform space $(X,\mu)$ the following statements are equivalent:
\begin{enumerate}
\item for  every non-vanishing real-valued uniformly continuous function $f\in U_\mu(X)$ the inverse function $1/f$ can be continuously extended to $H(U_{\mu}(X))$;
\smallskip

\item there is no bounded real-valeud uniformly continuous function $f\in U^{*}_\mu(X)$ such that $f(x)> 0$ for every $x\in X$ and  $F(\xi)=0$ for some $\xi \in H(U_\mu(X))$, where $F$ denotes the (unique) continuous extension of $f$ to $H(U_\mu(X))$;
\smallskip

\item every Cauchy $z_u$-filter $\mathcal{F}$ of $(X, wU_{\mu}(X)) $ satisfies the $\omega _1$-intersection property;
\smallskip

\item every minimal Cauchy filter $\mathcal{F}$ of $(X, wU_{\mu}(X)) $ satisfies the $\omega _1$-intersection property;
\smallskip

\item every Cauchy $z_u$- ultrafilter $\mathcal{F}$ of $(X, wU_{\mu}(X)) $ satisfies the $\omega _1$-intersection property.

\end{enumerate}

\begin{proof} $(1)\Rightarrow (2)$ If there is some  function $f\in U^{*}_{\mu}(X)$ such that $f(x)> 0$ for every $x\in X$ but satisfying that $F(\xi)=0$ for some $\xi \in H(U_{\mu}(X))$, where $F$ denotes the unique (\cite{garrido1}) continuous extension of $f$  to $H(U_\mu(X))$, then the continuous function $g=1/f$ cannot be continuously extended to $\xi$ as it is not defined in the point, by uniqueness of the extensions.

\smallskip

$(2)\Rightarrow (1)$ Let $f\in U_\mu (X)$ be non-vanishing. Since $U_\mu (X)$ is a vector lattice, without lose of generality we may suppose that $f> 0$. Next, let us write $f= g\cdot h$ where $g(x)=min\{ f(x),1\}$ and $h=max\{f(x),1\}$. Then the  continuous function $1/f$ can be continuously extended to $\xi \in H(U_\mu (X))$ if both continuous functions $1/h$ and $1/g$ can be continuously extended too (by uniqueness of the extensions). But this is easily seen since  $1/g$ can be continuously extended as it is uniformly continuous, and, by hypothesis, $1/h$ can be also continuously to $1/H$, where $H$ is the unique continuous extension to of the function $h$ to $H(U_\mu (X)).$

\smallskip 

$(2)\Rightarrow (3)$ Let $ \mathcal{F}$ be a Cauchy $z_u$-filter of $(X, wU_{\mu}(X)) $ and suppose, on the contrary, that for some subfamily $\{F_n:n\in \Nset\}\subset \mathcal{F}$, $$\bigcap_{n\in \Nset} F_n = \emptyset.$$ As $\mathcal{F}$ is a $z_u$-filter we may suppose that $F_{n+1}\subset F_n$ for every $n\in \Nset$ and that each $F_n$ is a $z_u$-set, that is, for every $n\in \Nset$ there exists some $f_n\in U_{\mu}(X)$ such that $F_n=f_n^{-1}(\{0\})$. Then, it is easy to check that the function $$f(x)=\sum _{n=1}^{\infty} 2^{-n}\cdot min\{|f_n(x)|,1\}$$ is  uniformly continuous and bounded. In particular, $f(x)\neq 0$ for every $x\in X$ as $\bigcap_{n\in \Nset} F_n =\emptyset$.

Let $F$ be the continuous extension of $f$ to $H(U_d(X))$. We are going to prove that there is some $\xi \in H(U_d(X))$, such that $F(\xi)=0$, contradicting like this  statement $(2)$.

Indeed, recall that $H(U_\mu(X))$ coincides topologically with the completion of $(X, wU_\mu(X))$. Therefore, if $\mathcal{F}$ is a Cauchy $z_u$-filter of $(X, wU_\mu(X))$, then $\mathcal{F}$ converges to some $\xi \in H(U_\mu(X))$. Now, by continuity,
 $$F(\xi)\in \bigcap _{n\in \Nset}F\big({\rm cl}_{H(U_d(X))}F_n\big)\subset \bigcap _{n\in \Nset} {\rm cl}_{\Rset} f(F_n)\subset\bigcap _{n\in \Nset} {\rm cl}_{\mathbb{R}}(0,1/2^n]\subset \bigcap _{n\in \Nset} [0,1/2^n]=\{0\} $$ as we wanted to show.  

\smallskip

$(3)\Rightarrow (4)$ Since every minimal Cauchy filter is a $z_u$-filter the implication follows.

\smallskip

$(4)\Rightarrow (5)$ Every Cauchy $z_u$-ultrafilter $\mathcal{F}$ contains a minimal Cauchy filter $\mathcal{G}$. If $\mathcal{G}$ satisfies the $\omega_1$-intersection property, then by \cite[Corollary 1.3]{husek},  $\mathcal{F}$ satisfies the $\omega_1$-intersection property.

$(5)\Rightarrow (2)$ Suppose, by contradiction, that there is some non-vanishing $f\in U^{*}_\mu(X)$ such that $F(\xi)=0$ for some $\xi \in H(U_\mu(X))$, where $F$ is the continuous extension of $f$ to $H(U_\mu(X))$. Since $H(U_\mu(X))$ can be described as the completion of $(X,wU_\mu(X))$, then there exists some Cauchy $z_u$-ultrafilter $\mathcal{F}$ in $(X,wU_d(X))$ converging to $\xi$. 

Then, the $z_u$-sets $f^{-1}\big((0,1/n]\big)=F^{-1}\big([0,1/n]\big)\cap X$, belongs to the $z_u$-ultrafilter $\mathcal{F}$ for every $n\in \Nset$ since, by continuity of $F$, $F^{-1}\big([0,1/n]\big)\cap F\neq \emptyset$ for every $F\in \mathcal{F}$. Thus, since $\bigcap_{n\in \Nset}f^{-1}\big((0,1/n]\big)=\emptyset$, we contradict statement $(5)$.

\end{proof}
\end{theorem}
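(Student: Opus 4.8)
The plan is to prove the five statements equivalent by establishing the single cycle $(1)\Rightarrow(2)\Rightarrow(3)\Rightarrow(4)\Rightarrow(5)\Rightarrow(2)$ together with the reverse implication $(2)\Rightarrow(1)$, so that $(1)$ and $(2)$ become equivalent and close the loop with the filter conditions $(3)$, $(4)$ and $(5)$. Throughout I would lean on the identification, recalled above, that $H(U_\mu(X))$ is topologically the completion of $(X,wU_\mu(X))$; hence every point $\xi$ of the realcompactification is the limit of a Cauchy $z_u$-(ultra)filter, and conversely every Cauchy $z_u$-filter of $(X,wU_\mu(X))$ converges to a unique such point. This dictionary between points at infinity and Cauchy $z_u$-filters is what lets me translate the analytic statements $(1)$ and $(2)$ into the set-theoretic intersection properties $(3)$, $(4)$ and $(5)$.

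For the equivalence $(1)\Leftrightarrow(2)$ I would argue as follows. The implication $(1)\Rightarrow(2)$ is immediate from uniqueness of continuous extensions: a positive $f$ whose extension $F$ satisfies $F(\xi)=0$ forces $1/f$ to admit no continuous value at $\xi$. For $(2)\Rightarrow(1)$, given a non-vanishing $f\in U_\mu(X)$ I first reduce to the case $f>0$ using that $U_\mu(X)$ is a vector lattice, and then factor $f=g\cdot h$ with $g=\min\{f,1\}$ and $h=\max\{f,1\}$. The point is that $1/h$ is uniformly continuous, since $t\mapsto 1/t$ is Lipschitz on $[1,\infty)$ and $h\geq 1$, so $1/h$ extends automatically; while $g$ is bounded and strictly positive, so hypothesis $(2)$ guarantees that its extension never vanishes and $1/g$ extends as $1/G$. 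Then $1/f=(1/g)(1/h)$ extends as a product of continuous functions.

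The heart of the argument is $(2)\Rightarrow(3)$. Assuming $(3)$ fails, I would extract from a Cauchy $z_u$-filter $\mathcal{F}$ a decreasing sequence of $z_u$-sets $F_n=f_n^{-1}(\{0\})\in\mathcal{F}$ with $\bigcap_n F_n=\emptyset$, and assemble the single function
$$f(x)=\sum_{n=1}^{\infty}2^{-n}\min\{|f_n(x)|,1\},$$
which is bounded, uniformly continuous (the series converges uniformly by the $2^{-n}$ bound, and a uniform limit of uniformly continuous functions is uniformly continuous), and strictly positive, since no point lies in every $F_n$. Because $F_n\subset F_{n-1}\subset\cdots$, the first $n$ summands vanish on $F_n$, whence $f(F_n)\subset(0,2^{-n}]$. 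Letting $\xi$ be the limit of $\mathcal{F}$ in the completion, continuity gives $F(\xi)\in\bigcap_n{\rm cl}_{\Rset}f(F_n)\subset\bigcap_n[0,2^{-n}]=\{0\}$, contradicting $(2)$. The remaining implications are lighter: $(3)\Rightarrow(4)$ holds because every minimal Cauchy filter is a $z_u$-filter; $(4)\Rightarrow(5)$ follows since a Cauchy $z_u$-ultrafilter contains a minimal Cauchy filter $\mathcal{G}$, and by \cite[Corollary 1.3]{husek} the $\omega_1$-intersection property of $\mathcal{G}$ is inherited by the ultrafilter; and for $(5)\Rightarrow(2)$ a counterexample $f$ to $(2)$ produces a Cauchy $z_u$-ultrafilter $\mathcal{F}\to\xi$ with $F(\xi)=0$, whose members all meet the $z_u$-sets $f^{-1}((0,1/n])$ by continuity, forcing these into $\mathcal{F}$ by maximality while $\bigcap_n f^{-1}((0,1/n])=\emptyset$.

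I expect the main obstacle to be the construction step in $(2)\Rightarrow(3)$: one must verify not only that the function $f$ built from the countably many $f_n$ is genuinely uniformly continuous, but, crucially, that its extension $F$ actually attains the value $0$ at the limit point $\xi$, which rests on the sharp bound $f(F_n)\subset(0,2^{-n}]$ and on the nested structure of the $F_n$. A secondary delicate point is the lattice reduction to $f>0$ in $(2)\Rightarrow(1)$, and the invocation of \cite[Corollary 1.3]{husek} in $(4)\Rightarrow(5)$, which is exactly what permits the passage from minimal Cauchy filters to $z_u$-ultrafilters and keeps the cycle closed.
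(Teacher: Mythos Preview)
Your proposal is correct and follows essentially the same cycle of implications and the same arguments as the paper's own proof, including the factorisation $f=\min\{f,1\}\cdot\max\{f,1\}$ in $(2)\Rightarrow(1)$, the series $\sum 2^{-n}\min\{|f_n|,1\}$ in $(2)\Rightarrow(3)$, and the appeal to \cite[Corollary~1.3]{husek} in $(4)\Rightarrow(5)$. You in fact assign the roles of $g$ and $h$ in the step $(2)\Rightarrow(1)$ more carefully than the paper, which in its last sentence there appears to have swapped them (it is $1/h$ that is uniformly continuous since $h\geq 1$, and it is $g=\min\{f,1\}$ to which hypothesis $(2)$ applies).
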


\begin{remark} In the above theorem, statement $(2)$ is telling us that $H(U_\mu(X))$ is the $G_{\delta}$-closure of $X$ is it is Samuel compactification $s_\mu X$. Indeed, recall that $H(U_\mu(X))$ is the $G_{\delta}$-closure of $X$ in $s_\mu X$ means that every zero-set of $s_\mu X$ that meets $H(U_\mu (X))$ also meets $X$. Since the zero-sets of of $s_\mu X$ are exactly the extensions of the $z_u$-sets of $(X,\mu)$ to $s_\mu X$, the equivalence follows (see \cite{garrido1}).

\end{remark}

Coming back to the particular case that we want to study, that is, the Baire space $(D^\kappa, \pi)$, $\kappa \geq \omega_0$, recall, from the introduction, that the weak uniformity $wU_\pi (D^\kappa)$ coincides with the uniformity $e\pi$ induced by the uniform  partitions of  the form $\{A\times D^{\kappa \backslash N}:A\in \mathcal{A}\}$ where $\mathcal{A}$ is a countable partition of $D^N$ and $N$ is a finite set of $\kappa=\{\alpha:\alpha <\kappa\}$.

Moreover, it is very useful to know the following facts:

\begin{enumerate}

\item Let $(D,{\tt u})$ be a uniformly discrete space and $e{\tt u}$ the uniformity on $D$ having as a base the countable partitions of $D$.  Then a ($z_u$-)filter $\mathcal{F}$ is  Cauchy in $(D,e{\tt u})$ if and only if it is an $\omega_1$-complete ultrafilter of $D$.
\smallskip

\item A filter $\mathcal{F}$ on $(D^\kappa, e\pi)$ is Cauchy if and only if every of its projections $p_N(\mathcal{F})$ on $D^N$, where $N$ is a finite set of $\kappa=\{\alpha:\alpha<\kappa\}$, is an $\omega_1$-complete ultrafilter of $D^N$. Indeed, recall that the projection maps are uniformly continuous and then they preserve Cauchy filters (\cite{borsik}). In particular, the premigages on $D^\kappa$ of all the projections $p_N(\mathcal{F})$ is a Cauchy $z_u$-filter of $(D^{\kappa},e\pi)$ contained in $\mathcal{F}$.
\end{enumerate}

\medskip

Finally, by Theorem \ref{first} and Theorem \ref{Cauchy}, the result that we wish to prove is the following.

\begin{theorem} \label{Domega} Let $D$ be a set of $\omega_1$-strongly compact cardinal and $\kappa \geq \omega _0$. Then, there exists a Cauchy $z_u$-filter of $(D^{\kappa},e\pi)$ failing the $\omega _1$-countable intersection property.
\end{theorem}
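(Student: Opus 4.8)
The plan is to reduce the statement to a purely combinatorial fact about ultrafilters on the finite powers of $D$, and then to establish that fact using $\omega_1$-strong compactness. Since $\kappa\ge\omega_0$, only countably many coordinates are needed: fixing a countably infinite $I\subseteq\kappa$, it suffices to treat $\kappa=\omega_0$ and then take the countably-supported pullback of the resulting filter to $D^{\kappa}$ (completing it by principal ultrafilters on the remaining coordinates). Write $\lambda=|D|$ and identify $D$ with $\lambda$.

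\emph{The reduction.} By fact (2) above, any \emph{coherent} family $(u_N)_{N\in[\omega]^{<\omega}}$ of $\omega_1$-complete ultrafilters $u_N$ on $D^N$ — coherent meaning that the image of $u_M$ under $D^M\to D^N$ is $u_N$ whenever $N\subseteq M$ — generates a Cauchy $z_u$-filter $\mathcal{F}$ on $(D^\omega,e\pi)$, namely the one with base $\{p_N^{-1}(A):N\in[\omega]^{<\omega},\ A\in u_N\}$, and each such finite cylinder $A\times D^{\omega\setminus N}$ is a $z_u$-set. Conversely, unwinding the definition of $z_u$-set shows that every $z_u$-set of $(D^\omega,e\pi)$ is a countable intersection of finite cylinders; hence if a Cauchy $z_u$-filter fails the $\omega_1$-intersection property, then so does the subfilter generated by its finite-cylinder members, which is exactly of the form above. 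Therefore Theorem \ref{Domega} is equivalent to: there exist a coherent system $(u_n)_{n<\omega}$, with $u_n$ on $D^n$ and $u_{n+1}$ projecting onto $u_n$ under $D^{n+1}\to D^n$, and a tree $T\subseteq D^{<\omega}$ with no infinite branch, such that $T\cap D^n\in u_n$ for every $n$. Indeed, for such $T$ the cylinders $F_n=(T\cap D^n)\times D^{\omega\setminus\{0,\dots,n-1\}}$ lie in $\mathcal{F}$, decrease, and satisfy $\bigcap_nF_n=\emptyset$ because $T$ has no branch — the same mechanism as in the proof of $(2)\Rightarrow(3)$ of Theorem \ref{Cauchy}.

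\emph{The construction.} This is the core, and it is where $\omega_1$-strong compactness is used, to extend the $\lambda$-complete filters we shall build (on $D$ and on ordinals $\le\lambda$) to $\omega_1$-complete ultrafilters. The idea is to equip $T$ with an ordinal rank that strictly decreases along edges — making $T$ automatically well-founded — while keeping every level large by choosing the conditional ultrafilter of $u_{n+1}$ over $u_n$, above $u_n$-almost every node $s\in D^n$, to concentrate on a set of children whose ranks are cofinal in $\mathrm{rk}(s)$. Concretely, I would fix a non-principal $\omega_1$-complete ultrafilter $w$ on the least measurable cardinal $\mu_0\le\lambda$ and build the $u_n$ recursively: given $u_n$ and the rank on level $n$, disintegrate the sought $u_{n+1}$ over $u_n$ and, for $u_n$-almost every $s$, take its conditional ultrafilter on $D$ to be a copy of $w$ transported along a fixed increasing sequence cofinal in $\mathrm{rk}(s)$ — the set of admissible ranks (limit ordinals of cofinality $\ge\mu_0$ and successors, with the rank-$0$ leaves forming a $u_n$-null set at each level) being chosen so these transports exist — and put $u_{n+1}=\int(u_{n+1})_s\,du_n(s)$. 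One then checks that $u_{n+1}$ is an $\omega_1$-complete ultrafilter projecting onto $u_n$, that the system extends coherently to all finite $N$, and that $T$ has all levels large and no branch. That the conditional ultrafilters genuinely depend on $n$ is essential: for a homogeneous system, say an iterated product $u^{\otimes n}$ of a single ultrafilter $u$ on $D$, a short recursion — picking at each step a coordinate value admissible for all levels at once, using countable completeness — shows that \emph{every} tree with large levels has an infinite branch; the $n$-dependence that $\omega_1$-strong compactness makes available is precisely what breaks this.

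I expect the main obstacle to be exactly this last construction — reconciling largeness of \emph{every} level with the absence of branches. Since an $\omega_1$-complete ultrafilter cannot tell apart the pieces of a countable partition, a branch cannot be blocked at a single coordinate, so the rank must descend across infinitely many coordinates simultaneously; and since only ordinals of cofinality at least the first measurable cardinal can be approached by a non-principal $\omega_1$-complete ultrafilter, the admissible ranks, the cofinal sequences along which $w$ is transported, and the placement of the leaves must all be chosen compatibly. Carrying out this bookkeeping, and verifying at each stage that $\omega_1$-strong compactness really does supply every $\omega_1$-complete ultrafilter needed, is where the work lies; everything else follows from the material already developed.
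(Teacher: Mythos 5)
Your reduction of the theorem to a combinatorial statement --- a projection-coherent tower $(u_n)$ of $\omega_1$-complete ultrafilters on the spaces $D^n$ together with sets $A_n\in u_n$ whose cylinders have empty intersection --- is correct and is exactly the reduction the paper makes (via its facts (1)--(2) and the filter $\mathcal{F}=\bigcup_n p_n^{-1}(\mathcal{U}_n)$), as is the passage from $\kappa=\omega_0$ to general $\kappa$. The fatal problem is in the core construction. You build $u_{n+1}$ as a Fubini integral $\int (u_{n+1})_s\,du_n(s)$ of conditional ultrafilters $(u_{n+1})_s$ on $D$ that are $\omega_1$-complete (transported copies of $w$, or extensions of $\mu_0$-complete filters). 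But \emph{any} such tower provably has the countable intersection property on cylinders, so no rank assignment, however clever, can produce a branchless tree with all levels large. Indeed, set $L_0(s)=1$ iff $s\in A_n$ and $L_{k+1}(s)=1$ iff $s\in A_n$ and $\{c: L_k(s^\frown c)=1\}\in (u_{n+1})_s$; the Fubini property gives $\{s\in D^n: L_k(s)=1\}\in u_n$ for all $k$ by induction, countable completeness of $u_1$ gives a point $s_1$ with $L_k(s_1)=1$ for all $k$, and countable completeness of $(u_{n+1})_{s_n}$ gives at each step a child $s_{n+1}$ with the same property; the resulting $x$ lies in $\bigcap_n p_n^{-1}(A_n)$. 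This is precisely the diagonal argument you yourself invoke to dispose of the homogeneous case $u^{\otimes n}$, and it uses only countable completeness of the conditionals and the disintegration --- nowhere does it use that the conditionals are independent of $n$ or of $s$. So the ``$n$-dependence that $\omega_1$-strong compactness makes available'' does not break the recursion, and the obstacle you flag at the end (``reconciling largeness of every level with the absence of branches'') is not a bookkeeping issue but a genuine impossibility for your class of towers. (Note also that if $u_{n+1}$ is $\omega_1$-complete and disintegrates over $u_n$, then $u_n$-almost all conditionals are automatically $\omega_1$-complete, so you cannot escape by weakening the conditionals either.)

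The paper avoids this trap --- which it explicitly discusses in the heuristic preceding Theorem \ref{matrix}, via the diagonal sequence $(z^{n+j,j})$ --- by doing all the combinatorial work at the level of $|D|$-complete \emph{filters} $\mathcal{B}_n$ rather than ultrafilters: Theorem \ref{matrix} constructs, by a transfinite recursion of length $|D|$ (the sets $B_n^{\alpha}(\mathcal{A})$, which strictly shrink in $\alpha$ and satisfy $\bigcap_{\alpha<|D|}B_n^{\alpha}(\mathcal{A})=\emptyset$), a coherent system of $|D|$-complete filter bases containing the projections $p_n(F_k)$, and only at the very end invokes $\omega_1$-strong compactness to extend each $\mathcal{B}_n$ to an $\omega_1$-complete ultrafilter $\mathcal{U}_n$, \emph{without} prescribing any conditional structure on the extension. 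The argument above shows these $\mathcal{U}_n$ cannot be Fubini sums over their predecessors with countably complete conditionals; that is exactly why the ultrafilters must be obtained by an abstract extension principle rather than constructed fibrewise as you propose. To repair your proof you would have to replace the entire ``construction'' paragraph by something of this shape: a transfinite (length $|D|$, not $\omega$) production of coherent $|D|$-complete filters witnessing $\bigcap_n F_n=\emptyset$, followed by a single application of $\omega_1$-strong compactness.
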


\begin{remark} If we prove the above result, then we prove equivalently that the Samuel realcompactification $H(U_\pi(D^{\kappa}))$ do not coincide with the $G_\delta$-closure of $D^\kappa$ in its Samuel compactification $s_\pi D^\kappa$ whenever $|D|$ is $\omega_1$-strongly compact. Thus, both realcompactifications are not homeomorphic (or equivalent, see \cite{engelking.realcompact}) in this case. In particular, it follows that $H(U_\pi (D^\kappa))$ is not homeomorphic either to the Hewitt realcompactification $\upsilon D^\kappa$. However, we cannot tell if $\upsilon D^\kappa$ is  homeomorphic or not to the $G_\delta$-closure of $D^\kappa$ in its Samuel compactification. While this is always true for $\kappa=\omega _0$ and for $\kappa >\omega _0$, whenever $|D|$ is not Ulam-measurable, we don't know what happens if $k>\omega _0$ and $|D|$ is  Ulam-measurable.
\end{remark}

\end{section}

\medskip

\begin{section}{The proof}

In order to prove Theorem \ref{Domega}, we first prove it for $\kappa =\omega _0$ and next we deduce the general case from it.

\begin{theorem} \label{Domegametric} Let $D$ be a set of $\omega_1$-strongly compact cardinal. Then there exists a Cauchy $z_u$-filter of $(D^{\omega _0},e\pi)$ failing the $\omega_1$-intersection property.
\end{theorem}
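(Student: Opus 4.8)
The plan is to work entirely on the filter side. By the equivalence $(3)\Leftrightarrow(4)$ of Theorem~\ref{Cauchy} it is enough to exhibit \emph{one} minimal Cauchy filter of $(D^{\omega_0},e\pi)$ failing the $\omega_1$-intersection property; and by the two facts recalled above — a filter on $(D^{\omega_0},e\pi)$ is Cauchy exactly when each of its finite-coordinate projections is an $\omega_1$-complete ultrafilter, and the minimal Cauchy filter inside it is the one generated by the preimages of those projections — this amounts to the following combinatorial task. Construct a \emph{coherent} sequence $(\mathcal{U}_n)_{n<\omega}$ of $\omega_1$-complete ultrafilters, $\mathcal{U}_n$ on $D^n$, with $\mathcal{U}_n$ the image of $\mathcal{U}_{n+1}$ under the projection $D^{n+1}\to D^n$ that forgets the last coordinate, in such a way that the filter $\mathcal{F}$ on $D^{\omega_0}$ generated by all the sets $B\times D^{\omega_0\setminus n}$ $(n<\omega,\ B\in\mathcal{U}_n)$ fails the $\omega_1$-intersection property. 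Such an $\mathcal{F}$ is automatically a $z_u$-filter, since the generators $B\times D^{\omega_0\setminus n}$ are $z_u$-sets of $(D^{\omega_0},e\pi)$ and $z_u$-sets are closed under finite intersections; and it is Cauchy by the characterisation just quoted. Unravelling the last requirement, it says precisely that there are sets $B_n\in\mathcal{U}_n$ with $B_{n+1}$ projecting into $B_n$ and $\bigcap_n\bigl(B_n\times D^{\omega_0\setminus n}\bigr)=\emptyset$; that is, the subtree $T=\bigcup_n B_n$ of $D^{<\omega}$ has no infinite branch, while each of its levels $T\cap D^n=B_n$ lies in $\mathcal{U}_n$.

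To produce the $\mathcal{U}_n$ I would proceed by recursion on $n$, having first fixed a suitable well-founded tree $T$ (this choice is itself delicate: it cannot have countable rank, since $\omega_1$-completeness would then force the levels to collapse, and it should presumably be governed by a scale into the ordinals below a measurable cardinal $\le|D|$, which exists because $|D|$ is Ulam-measurable). Start with $\mathcal{U}_0$ trivial and $B_0=\{\emptyset\}$. Given $\mathcal{U}_n$ with $B_n\in\mathcal{U}_n$, let $H_{n+1}$ be the filter on $D^{n+1}$ generated by $\{\,B\times D:B\in\mathcal{U}_n\,\}$ together with $B_{n+1}:=T\cap D^{n+1}$. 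One checks that $H_{n+1}$ is a proper filter — this needs the non-leaf part of $B_n$ to be $\mathcal{U}_n$-large, which is arranged via $T$ — and that $H_{n+1}$ is $|D|$-complete. Since $|D|$ is $\omega_1$-strongly compact, $H_{n+1}$ extends to an $\omega_1$-complete ultrafilter $\mathcal{U}_{n+1}$ on $D^{n+1}$. Coherence is then immediate from $\{\,B\times D:B\in\mathcal{U}_n\,\}\subseteq\mathcal{U}_{n+1}$ (both sides are ultrafilters, one projecting onto the other), and $B_{n+1}\in\mathcal{U}_{n+1}$ by construction. Running this through $\omega$ steps gives the coherent sequence, hence the Cauchy $z_u$-filter $\mathcal{F}$, and $\mathcal{F}$ fails the $\omega_1$-intersection property, witnessed by $\{\,B_n\times D^{\omega_0\setminus n}:n<\omega\,\}$, whose intersection is the (empty) set of infinite branches of $T$.

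The two points carrying all the weight are: (i) that each $H_{n+1}$ is $|D|$-complete, so that $\omega_1$-strong compactness can actually be invoked at every stage — this is what dictates the precise shape of the recursion and the choice of $T$, and where the hypothesis on $|D|$ must be used; and (ii) that the tower $(B_n)$ genuinely \emph{escapes}, i.e. that $T$ is well-founded and yet every level $B_n$ is $\mathcal{U}_n$-large. I expect (ii) to be the real obstacle: $\omega_1$-completeness of the $\mathcal{U}_n$ tends to concentrate each $\mathcal{U}_n$ on a single branch of $T$ — which would turn some level into a set of leaves and destroy $B_{n+1}\in\mathcal{U}_{n+1}$ — whereas well-foundedness forbids any infinite branch; reconciling these two pulls is exactly the content of the theorem, and is where genuine large-cardinal strength beyond mere Ulam-measurability of $|D|$ is needed (what one is really arranging is that the inverse limit of the $\mathcal{U}_n$ is \emph{not} realised by any $\omega_1$-complete ultrafilter on $D^{\omega_0}$, so that $\mathcal{F}$ cannot be contained in one). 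Finally, once Theorem~\ref{Domegametric} is established, the general Theorem~\ref{Domega} — and through Theorem~\ref{Cauchy} also Theorem~\ref{first} — should follow by transporting this example from $D^{\omega_0}$ to $D^{\kappa}$ along a countable subset of coordinates.
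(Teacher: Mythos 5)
Your reduction of the problem is the right one and coincides with the paper's plan: produce a coherent sequence of $\omega_1$-complete ultrafilters $\mathcal{U}_n$ on $D^n$ together with sets $B_n\in\mathcal{U}_n$ whose inverse limit is empty, and observe that $\mathcal{F}=\bigcup_n p_n^{-1}(\mathcal{U}_n)$ is then a Cauchy $z_u$-filter of $(D^{\omega_0},e\pi)$ failing the $\omega_1$-intersection property. But the recursion you propose to build the $\mathcal{U}_n$ does not work, and the two points you yourself flag as ``carrying all the weight'' are exactly the ones left unproved. Concretely, your filter $H_{n+1}$ contains $\{B\times D:B\in\mathcal{U}_n\}$, and $\mathcal{U}_n$ is only guaranteed to be $\omega_1$-complete: an $\omega_1$-complete ultrafilter on $D^n$ need not be $|D|$-complete (it may concentrate on a set whose cardinal is the first measurable cardinal, which can be strictly smaller than the first $\omega_1$-strongly compact cardinal). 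Hence $H_{n+1}$ cannot in general be $|D|$-complete, and the definition of $\omega_1$-strong compactness --- which only extends \emph{$|D|$-complete} filters --- cannot be invoked at stage $n+1$. There is no way to repair this while keeping $p^{-1}(\mathcal{U}_n)\subseteq H_{n+1}$; interleaving the ultrafilter extensions with the construction of the tree is the wrong order of quantifiers.

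The paper's proof avoids this by doing all the combinatorics \emph{before} any appeal to strong compactness: Theorem~\ref{matrix} explicitly constructs, by a transfinite recursion of length $|D|$ (the successor and limit lemmas), sets $A_n\subseteq D^n$ whose derived sets $B_n^{\alpha}(\mathcal{A})$, $\alpha<|D|$, form for each $n$ a base of a genuinely $|D|$-complete filter $\mathcal{B}_n$ containing all the projections $p_n(F_k)$, coherent in the sense $p_n(p_{n+1}^{-1}(\mathcal{B}_{n+1}))\subseteq\mathcal{B}_n$, and with $\bigcap_{\alpha<|D|}B_n^{\alpha}(\mathcal{A})=\emptyset$ (which forces $\bigcap_n F_n=\emptyset$). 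Only then is $\omega_1$-strong compactness applied, once for each $n$, to the $|D|$-complete filter $\mathcal{B}_n$. Your point (ii) --- a well-founded tree all of whose levels are large for the respective filters --- is precisely the content of that construction, and it is the main technical work of the paper; gesturing at ``a scale into the ordinals below a measurable cardinal'' does not supply it. As it stands the proposal is a correct reformulation of the statement plus an honest list of the obstacles, not a proof.
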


The idea of the proof is the following. Let $p_n:D^{\omega _0}\rightarrow D^n$, $n\in \Nset$, be the projections onto the first $n$-coordinates, $$p_n (\langle x_1,x_2,...,x_k,...\rangle)=\langle x_1,x_2,...,x_n\rangle.$$ First, we are going to define a decreasing family of $z_u$-sets $\{F_n:n\in \Nset\}$ of $D^{\omega _0}$ such that $\bigcap _{n\in \Nset}F_n =\emptyset$ (see Theorem \ref{matrix}). To imagine this family is not very difficult, but we wish that it belongs to some Cauchy $z_u$-filter $\mathcal{F}$ of $(D^{\omega_0},e\pi)$. 

To find the filter $\mathcal{F}$ we are going to define carefully the sets $F_n$ in such a way that, for every $n\in \Nset$, the family of projections $\{p_n(F_k):k\in \Nset\}$ belongs to some $|D|$-complete filter $\mathcal{B}_n$ of $D^{n}$ satisfying that $\bigcap \mathcal{B}_n=\emptyset$. Moreover the filters $\mathcal{B}_n$, $n\in \Nset$ will be related between them as follows: $p_{n}(p_{n+1}^{-1}(\mathcal{B}_{n+1}))\subset \mathcal{B}_n$ for every $n\in \Nset$.

If this is the case, observe that, since each set $D^{n}$ has $\omega_1$-strongly compact cardinal $|D|$, the $|D|$-complete filter $\mathcal{B}_n$ can be extended to a $\omega _1$-complete ultrafilter $\mathcal{U}_n$ of $D^n$. Therefore, as we have said in the previous section, the preimages $\mathcal{F}=\bigcup\{p_{n}^{-1}(\mathcal{U}_n):n\in \Nset\}$ form a Cauchy $z_u$-filter of $(D^{\omega _0}, e\pi)$ which in particular fails the $\omega_1$-intersection property.

In this proof, the difficult task will be to define the family of $z_u$-sets $\{F_n:n\in \Nset\}$, but once we have it, we can easily prove  Theorem \ref{Domegametric} as we have just seen.

\medskip

In order to approach the above plan of proof we take into the account the following fact. Suppose that we have a family of decreasing $z_u$-sets $\{F_n:n\in \Nset\}$ such that $\bigcap _{n\in \Nset }F_n =\emptyset$. Next, let us write $B^0 _n=\bigcap_{k\geq n} p_n(F_k)$, $n\in \Nset$. If for some $n\in \Nset$, $B^0 _n=\emptyset$ then the family of projections $\{p_n (F_k):k\in \Nset\}$ does not belong to a $|D|$-complete filter of $D^n$. So we need that $B_n ^0\neq \emptyset$ for every $n\in \Nset$.

Moreover, we need to assure also that for every $n\in \Nset$, there exists some $k\geq n$ such that $p_{k}(p^{-1}_{k+1}(B^{0} _{
k+1}))\subsetneq  B^0_{k}$ because otherwise $\bigcap_{n\in \Nset} F_{n}\neq \emptyset$. Indeed, suppose that for some $n\in \Nset$ and for every $k\geq n$, $p_{k}(p^{-1}_{k+1}(B^{0} _{
k+1}))=  B^0_{k}$ and let us pick some $\langle x_1,x_2,...,x_n\rangle\in B^0_n$. Then, by definition of $B^0_n$, for every $j\in \Nset$ there exists some $z^{n,j}\in F_j$ such that $p_n(z^{n ,j})=\langle x_1,x_2,...,x_n\rangle$. Next, since $B^0 _n=p_{n}(p^{-1}_{n+1}(B^{0} _{
n+1}))$ we have that for the fixed $\langle x_1,x_2,...,x_n\rangle \in B^0 _n$ above, we can take some $x_{n+1}\in D$, such that $\langle x_1,x_2,...,x_n,x_{n+1}\rangle \in B^{0} _{
n+1}$ and such that for every $j\in \Nset$ there exists some $z^{n+1,j}\in F_j$ satisfying that $p_{n+1}(z^{n+1 ,j})=\langle x_1,x_2,...,x_n, x_{n+1}\rangle$.

If we continue this way, by induction, we arrive to a point    $\langle x_1,x_2,...,x_n,...\rangle \in \bigcap _{n\in \Nset}F_n$. Indeed, the diagonal sequence $(z^{n+j, j})_{j\in \Nset}$ of points in $D^{\omega _0}$ obtained in the induction process converges to $\langle x_1,x_2,...,x_n,...\rangle$ because $p_{n+j}(z^{n+j, j})= \langle x_1,x_2,...,x_{n+j}\rangle$, for every $j\in \Nset$,  and in addition, satisfies that $z^{n+j, j}$ belongs to the $z_u$-set $F_k$ for every $j\geq k$ and every $k\in \Nset$. Thus, we get a contradiction as $\bigcap _{n\in \Nset}F_n =\emptyset$. 

\smallskip

Summarizing all the above, we need to assure that for every $n\in \Nset$, $B^0 _n \neq\emptyset$ and that, for every $n\in \Nset$, there exists some $k\geq n$ such that $p_{k}(p^{-1}_{k+1}(B^{0} _{
k+1}))\subsetneq  B^0_{k}$.  If both conditions are satisfied we can continue and define the sets $B^{1}_n=\bigcap_{k\geq n} p_n(p^{-1} _k (B^{0}_k))$, for every $n\in \Nset$. By the same reasons as before, we ask that for every $n\in \Nset$, $B^{1}_n\neq \emptyset$ and that, for every $n\in \Nset$, there exists  some $k\geq n$ such that $ p_{k}(p_{k+1}^{-1}(B^{1} _{
k+1})) \subsetneq B^1_{k}$. 

Now, by transfinite induction, for every ordinal $\alpha<|D|$, we can define the sets $$(\diamond)\text{ } \text{ } B^{\alpha}_n=\bigcap_{\beta <\alpha}\bigcap _{k\geq n}p_n (p_k ^{-1}(B_k^{\beta}))$$  always asking that for every $n\in \Nset$, $B^{\alpha} _n \neq \emptyset$ and that\begin{center} $(\clubsuit)$ for every $n\in \Nset$, there exists some $k\in \Nset$ such that  $p_{k}(p_{k+1}^{-1}(B^{\alpha}_{k+1}))\subsetneq B_{k} ^{\alpha}$.
\end{center} 



Proceeding in this way, for every $n\in \Nset$, we have a  family of sets $\{B^{\alpha}_n: \alpha < |D|\}$ which is a filter-base for a $|D|$-complete filter $\mathcal{B}_n$ of $D^n$ satisfying in addition that the family of projections $\{p_n(F_k):k\in \Nset\}$ belongs to it, as we wished. 

Moreover we are going to ask the additional condition that for some $n\in \Nset$, $$\bigcap _{\alpha <|D|}B^{\alpha}_n =\emptyset.$$ This condition is only a requirement in order to stop the possibility of getting indefinitely the sets $B^{\alpha }_n$, $\alpha \geq |D|$, $n\in \Nset$, defined as in $(\diamond)$. Indeed, this could bring us the situation that $\bigcap _{n\in \Nset}F_n\neq \emptyset$ (see for instance \cite{husek}), and we don't want it.   
\medskip






The existence of such $z_u$-sets $F_n$, $n\in \Nset$, of $D^{\omega _0}$ satisfying  all the above requirements $(\clubsuit)$ in its reiterated projections, is proved in the next Theorem \ref{matrix}. In it we put that for every $\alpha <|D|$, $B^{\alpha}_1=D\backslash \{x_\beta:\beta <\alpha\}$. Clearly there are other possibilities, as we will comment later, but, if this is  satisfied then  we have that the property $(\clubsuit)$ is granted because we don't get stuck in the process of generating the sets $B^{\alpha} _n$. Moreover, the family $\mathcal{B}_1=\{B^{\alpha}_1:\alpha <|D|\}$ is a $\kappa$-complete filter of $D$ such that $\bigcap _{\alpha <\kappa} B^{\alpha}_1=\emptyset$, as we required.  
\medskip





Before stating Theorem \ref{matrix}, we introduce the following notation. For every $n\in \Nset$, let $A_n\subset D^n$ be non-empty subsets and let us write $\mathcal{A}=\{A_n:n\in \Nset\}$ and $$B^{0}_n(\mathcal{A})=\bigcap _{k\geq n} p_n(p_k^{-1}(A_k)) \text{ for every }n\in \Nset.$$ Then, by recursion, for every $\alpha <|D|$, where $|D|\geq \omega _0$, and every $n\in \Nset$ we define $$B^{\alpha}_n(\mathcal{A})=\bigcap _{\beta <\alpha}\bigcap _{k\geq n} p_n(p_{k}^{-1}(B_k^{\beta}(\mathcal{A}))) .$$  Observe that in particular $$B^{\alpha +1}_n(\mathcal{A})=\bigcap _{k\geq n} p_n(p_{k}^{-1}(B_k^{\alpha}(\mathcal{A}))) $$ and, whenever $\alpha$ is a limit ordinal, $$ B^{\alpha }_n(\mathcal{A})=\bigcap _{\beta<\alpha} B_n^{\beta}(\mathcal{A}) .$$



\begin{theorem} \label{matrix} Let $D$ be an infinite set and well-order it, that is, put $D=\{x_{\alpha}:\alpha<|D|\}$. Then, there exists a decreasing countable family of $z_u$-sets of $D^{\omega _0}$, of the form $F_n=p_n^{-1}(A_n)$, where $A_n\subset D^n$, for every $n\in \Nset$, and where the family of sets  $\mathcal{A}=\{A_n:n\in \Nset\}$   satisfies that:
\begin{enumerate}



\item $B_1^{\alpha}(\mathcal{A})=D\backslash\{x_{\beta}:\beta<\alpha\}$ for every $\alpha <|D|$;
\smallskip

\item $|B_n^{\alpha}(\mathcal{A})|=|D|$ for every $n\in \Nset$ and $\alpha < |D|$;
\smallskip 

\item $\bigcap _{\alpha <|D|} B^{\alpha}_n(\mathcal{A})=\emptyset$ for every $n\in \Nset$.
\end{enumerate} In particular $\bigcap_{n\in \Nset} F_n=\emptyset$.
\end{theorem}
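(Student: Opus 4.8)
The plan is to recast the whole statement as one about a single subtree of $D^{<\omega}$ and its iterated ``kernels''. Identify $D$ with the cardinal $\lambda:=|D|$ via $x_{\alpha}\leftrightarrow\alpha$, and regard a candidate family $\mathcal{A}=\{A_{n}\}$ with $A_{n}\subseteq D^{n}$ and $F_{n}=p_{n}^{-1}(A_{n})$ decreasing as the subtree $A:=\bigcup_{n}A_{n}$ of $D^{<\omega}$ closed under initial segments ($A_{n}$ its $n$-th level); conversely every such tree yields a decreasing family of $z_{u}$-sets $F_{n}=p_{n}^{-1}(A_{n})$ of $D^{\omega_{0}}$ (these cylinders are $z_{u}$-sets, as noted above). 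For such $A$ let $\Phi(A)$ be the set of nodes extending to nodes of $A$ of every finite length; this is again closed under initial segments, and unwinding the recursion for $B^{\beta}_{n}(\mathcal{A})$ shows that $B^{\beta}_{n}(\mathcal{A})$ is exactly the $n$-th level of $\Phi^{\bar\beta}(A)$, where $\bar\beta=\beta+1$ for $\beta$ finite and $\bar\beta=\beta$ for $\beta$ infinite (the clause $B^{0}_{n}=\bigcap_{k\ge n}p_{n}(p_{k}^{-1}(A_{k}))$ already performs one $\Phi$, successors perform one more, at limits one intersects, and the shift gets absorbed at $\omega$). Thus (1)--(3) become statements about the iterated kernels $\Phi^{\gamma}(A)$, $\gamma<\lambda$: (1) pins down which first letters survive at each stage, (2) says every nonempty level of every $\Phi^{\gamma}(A)$ has full size $\lambda$, and (3) says $\Phi^{\lambda}(A)=\emptyset$.

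I would then construct $A$ by putting, above each letter $x_{\beta}$, a well-founded subtree $T_{\beta}\subseteq D^{<\omega}$ with nodes at every level, built by recursion on $\beta$ so that the least $\gamma$ for which $\Phi^{\gamma}(T_{\beta})$ becomes level-bounded equals $\beta+1$ if $\beta$ is finite and $\beta$ if $\beta$ is infinite. Concretely: $T_{0}$ is a root with, above it, one finite branch of each finite length; the successor step sends $T_{\beta}$ to $T_{\beta+1}$ by taking a new root with children $d_{1},d_{2},\dots$ and placing above $d_{\ell}$ a path of length $\ell$ followed by a fresh copy of $T_{\beta}$ --- the growing stalks are exactly what forces the kernel rank to increase by one even past $\omega$, where naive ``attach escape branches'' operations only add $1+(\cdot)=(\cdot)$; and at a limit $\delta$, $T_{\delta}$ is a new root with one subtree a fresh copy of $T_{\beta_{i}}$ for each term of a fixed cofinal sequence $\beta_{i}\to\delta$. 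Setting $A_{1}=D$ and $A_{n}=\{\,{\langle x_{\beta}\rangle}^{\frown}t:\beta<\lambda,\ t\ \text{a node of}\ T_{\beta}\ \text{at level}\ n-1\,\}$ gives an initial-segment-closed tree, hence a decreasing family $F_{n}=p_{n}^{-1}(A_{n})$ of $z_{u}$-sets.

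For the verification one uses that the $T_{\beta}$ sit over pairwise distinct first letters, so $\Phi$ acts coordinatewise on the first letter and $\Phi^{\gamma}(A)$ restricted to the part above $\langle x_{\beta}\rangle$ is just a copy of $\Phi^{\gamma}(T_{\beta})$. A transfinite induction on $\alpha$, run simultaneously over all $n$, then yields $B^{\alpha}_{1}(\mathcal{A})=\{x_{\beta}:\beta\ge\alpha\}=D\setminus\{x_{\beta}:\beta<\alpha\}$, i.e. (1) --- this is where the exact choice of the kernel ranks of the $T_{\beta}$ is used. Property (2) is then a counting fact: given $\alpha<\lambda$, each of the $\lambda$-many $\beta>\bar\alpha$ has $\Phi^{\bar\alpha}(T_{\beta})$ still meeting level $n-1$, so it contributes a level-$n$ node to $B^{\alpha}_{n}(\mathcal{A})$, whence $\lambda\le|B^{\alpha}_{n}(\mathcal{A})|\le|D^{n}|=\lambda$. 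Property (3) holds because each $T_{\beta}$ is well-founded with iterated kernels vanishing at some ordinal $<\lambda$ (using that $\lambda$, being an infinite cardinal, is a limit ordinal), so $\Phi^{\lambda}(A)=\emptyset$, i.e. $\bigcap_{\alpha<\lambda}B^{\alpha}_{n}(\mathcal{A})=\emptyset$ for all $n$. Finally ``$\bigcap_{n}F_{n}=\emptyset$'' follows from (3) at $n=1$: if $x=\langle y_{1},y_{2},\dots\rangle\in\bigcap_{n}F_{n}$ then $p_{n}(x)\in A_{n}$ for all $n$, and a trivial induction on $\alpha$ (each step only using that $p_{k}(x)\in B^{\beta}_{k}(\mathcal{A})$ projects down to $p_{n}(x)$) gives $p_{n}(x)\in B^{\alpha}_{n}(\mathcal{A})$ for all $n$ and all $\alpha$, so $y_{1}\in\bigcap_{\alpha<\lambda}B^{\alpha}_{1}(\mathcal{A})=\emptyset$, a contradiction.

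The main obstacle is the recursive construction of the trees $T_{\beta}$ together with the induction establishing (1): forcing the iterated flattening of $A$ to land on exactly the prescribed sets requires delicate ordinal bookkeeping --- the finite-versus-infinite shift $\beta\mapsto\bar\beta$, a successor operation that must raise the kernel rank by one uniformly (including past limit ordinals), and matching all of this against the precise prescription in (1), limit stages included. Everything after the construction --- that $\Phi$ preserves initial-segment-closed subtrees, that it decomposes over the first letter, that (2) is a counting argument, and that $\bigcap_{n}F_{n}=\emptyset$ is a consequence of (3) --- is routine.
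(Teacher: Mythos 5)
Your proposal is correct and is essentially the paper's own construction in different clothing: the paper also proceeds by transfinite recursion, placing above each first letter $x_\beta$ a subtree that survives exactly $1+\beta$ iterations of the pruning operation (your $\Phi^{1+\alpha}$-levels are exactly its sets $B_n^\alpha$), with a successor step that raises the rank by one via stalks of unbounded finite length attached before fresh copies of the previous tree (its Lemma~1, where the blocks $\bigcup_{k\geq n}J_k\times D^{n-2}$ play the role of your growing paths above the children $d_\ell$) and a limit step taking disjoint copies indexed cofinally below the limit (its Lemma~2), finally assembling $A_n=\bigcup_{\alpha<|D|}A_n(x_\alpha,\mathcal{P}^\alpha)$ just as you do. The only substantive divergence is how condition (2) is secured: the paper forces every subtree to be fat (it requires $|J_k|=|D|$ so that each intermediate $B_n^\beta$ of each subtree already has cardinality $|D|$), whereas you allow each $T_\beta$ to be skinny and obtain $|B_n^\alpha(\mathcal{A})|=|D|$ by counting one node over each of the $|D|$-many first letters $x_\beta$ with $\beta>\alpha$ that still survive at stage $1+\alpha$ --- both arguments are valid, and yours is marginally more economical.
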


 We proof Theorem \ref{matrix} by transfinite induction. Therefore, we need a couple of technical lemmas, one for succesor ordinals and another one for limit ordinals.

\smallskip

\begin{definition} Let $z\in D$,  $\alpha <|D|$ an ordinal and  $\mathcal{J}$ a family of sets of $D$. Suppose that a countable family of sets $\mathcal{A}(z,\mathcal{J}^{\alpha})=\{A_n(z,\mathcal{J}^{\alpha}):n\in \Nset\}$, depending on $z$, $\mathcal{J}$ and $\alpha$, has been defined. Then $\mathcal{A}(z,\mathcal{J}^{\alpha})$ satisfies the \huge{\Bat}\Large-{\it property} if :

\begin{enumerate} 

\item $A_1(z,\mathcal{J}^{\alpha})=\{z\}$
\smallskip

\item $A_n(z,\mathcal{J}^{\alpha})\subset D^n$ for every $n\in \Nset$;
\smallskip

\item $p_n(p_{n+1}^{-1}(A_{n+1}(z,\mathcal{J}^{\alpha}))) \subset A_n(z,\mathcal{J}^{\alpha})$ for every $n\in \Nset$;
\smallskip

\item $B_1^{\beta}(\mathcal{A}(z,\mathcal{J}^{\alpha}))=\{z\} $ for every $\beta \leq \alpha$;
\smallskip

\item $|B_n^{\beta}(\mathcal{A}(z,\mathcal{J}^{\alpha}))|=|D|$ for every $n\geq 2$ and every $\beta < \alpha$;
\smallskip

\item  $B_n^{\alpha }(\mathcal{A}(z,\mathcal{J}^{\alpha}))= \emptyset $ for every $n\in \Nset$, $n\geq 2$.
\end{enumerate}
\end{definition}

\begin{remark} We ask condition (5) in the above definition in order to assure condition (2) in Theorem \ref{matrix}. This is motivated by the fact that in the proof of Theorem \ref{Domegametric} we need that for every $n\in \Nset$, the ultrafilter $\mathcal{U}_n$ satisfying  the $|D|$-intersection property on $D^n$ that extends the filter-base $\{A_n\}\cup \{B_n^{\alpha}(\mathcal{A}): \alpha <|D|\}$ is free, and hence that $|U|$ is Ulam-measurable for every $U\in \mathcal{U}_n$ (see \cite{jech}).
\end{remark}

\begin{lemma} {\rm {\bf Successor ordinal.}} Let  $\alpha <|D|$ be an ordinal and $\mathcal{J}^{\alpha}$ a family of pairwise disjoint subsets of $D$. Suppose that for every $x\in D$, the family of sets $\mathcal{A}(x, \mathcal{J}^{\alpha})=\{A_n(x, \mathcal{J}^{\alpha}):n\in \Nset\}$ has been defined satisfying the  \huge{\Bat}\Large-property.  Let $\mathcal{J}=\{J_n:n\in \Nset\}$ be a family of pairwise disjoint subsets of $D$ such that $|J_n|=|D|$ for every $n\in \Nset$. Put $\mathcal{J}^{\alpha +1}=\mathcal{J}$, take $z\in D$ and define:
\begin{align*}
A_1(z, \mathcal{J}^{\alpha+1})=&\{z \}\\
A_2(z, \mathcal{J}^{\alpha+1})=&\{z\}  \times \bigcup_{k\geq 2} J_k\\
A_3(z, \mathcal{J}^{\alpha+1})=&\{z\} \times \big(\big(\bigcup_{k\geq 3} J_k\times D\big)\cup \bigcup\{A_2(x,\mathcal{J}^\alpha):x\in J_2\}\big)\\
A_4(z, \mathcal{J}^{\alpha+1})=&\{z\}\times \big(\big(\bigcup_{k\geq 4} J_k\times D^2\big)\cup \big(J_3\times \bigcup\{A_2(x,\mathcal{J}^\alpha):x\in D\}\big)\\
&\cup \bigcup\{A_3(x,\mathcal{J}^\alpha):x\in J_2\}\big)\\
\vdots \\
A_n(z, \mathcal{J}^{\alpha+1})=&\{z\} \times \big(\big(\bigcup_{k\geq n} J_k\times D^{n-2}\big)\\
&\cup \big(J_{n-1}\times D^{n-4}\times 
\bigcup\{A_2(x,\mathcal{J}^\alpha):x\in D\}\big) \\
&\cup\big(J_{n-2}\times D^{n-5}\times  \bigcup\{A_3(x,\mathcal{J}^\alpha):x\in D\}\big) \cup\ldots \\
&\cup \big(J_3\times \bigcup\{A_{n-2}(x,\mathcal{J}^\alpha):x\in D\} \big)\\
&\cup \bigcup\{A_{n-1}(x,\mathcal{J}^\alpha):x\in J_2\}\big)
\end{align*}
\begin{align*}
A_{n+1}(z, \mathcal{J}^{\alpha+1})=&\{z\} \times \big(\big(\bigcup_{k\geq n+1} J_k\times D^{n-1}\big)\\
&\cup \big(J_{n}\times D^{n-3}\times\bigcup\{A_2(x,\mathcal{J}^\alpha):x\in D\}\big)\\
&\cup \big(J_{n-1}\times D^{n-4}\times  \bigcup\{A_3(x,\mathcal{J}^\alpha):x\in D\}\big) \cup\ldots \\
&\cup \big(J_3\times \bigcup\{A_{n-1}(x,\mathcal{J}^\alpha):x\in D\} \big)\\
&\cup \bigcup\{A_{n}(x,\mathcal{J}^\alpha):x\in J_2\}\big)\\
&\vdots
\end{align*}

Then, the family of sets $\mathcal{A}(z, \mathcal{J}^{\alpha +1})=\{A_n(z, \mathcal{J}^{\alpha +1}):n\in \Nset\}$ satisfies the \huge{\Bat}\Large-property.

\begin{proof}

That $\mathcal{A}(z, \mathcal{J}^{\alpha +1})$ satisfies properties $(1), (2)$ and $(3)$ of the \huge{\Bat}\Large-property is clear from the definition of it. Therefore, we just prove $(4),$ $(5)$ and $(6)$. 

The following are easy to check: 
\begin{align*}
B_1^0(\mathcal{A}(z, \mathcal{J}^{\alpha +1}))=&\{z\}\\
B_2 ^0(\mathcal{A}(z, \mathcal{J}^{\alpha +1}))=&\{z\}  \times \bigcup_{k\geq 2} J_k\\
B_3 ^0(\mathcal{A}(z, \mathcal{J}^{\alpha +1}))=&\bigcap _{j\geq 3}\{z\} \times \big(\big(\bigcup_{k\geq 3} J_k\times D\big)\\
&\cup \bigcup\{p_2\big(p_{j-1}^{-1}(A_{j-1}(x,\mathcal{J}^{\alpha}))\big):x\in J_2\}\big)\\
=&\{z\} \times \big(\big(\bigcup_{k\geq 3} J_k\times D\big)\\
&\cup \bigcup\{ B_2 ^0(\mathcal{A}(x,\mathcal{J}^{\alpha})):x\in J_2\}\big)\\
B_4 ^0(\mathcal{A}(z, \mathcal{J}^{\alpha +1}))=&\bigcap _{j\geq 4}\{z\}\times \big(\big(\bigcup_{k\geq 4} J_k\times D^2\big)\\
&\cup \big(J_3 \times \bigcup\{p_2\big(p_{j-2}^{-1}(A_{j-2}(x,\mathcal{J}^{\alpha }))\big):x\in D\}\big)\\
&\cup\bigcup\{ p_3\big(p_{j-1}^{-1}(A_{j-1}(x,\mathcal{J}^{\alpha }))\big):x\in J_2 \}\big)\\
= &\{z\}\times \big(\big(\bigcup_{k\geq 4} J_k\times D^2\big)\\
&\cup \big(J_3\times \bigcup\{ B_2 ^0 (\mathcal{A}(x,\mathcal{J}^{\alpha })):x\in D\}\big)\\
&\cup \bigcup\{ B^{0}_3(\mathcal{A}(x,\mathcal{J}^{\alpha})):x\in J_2\}\big)\\
&\vdots\\
B_n ^0(\mathcal{A}(z, \mathcal{J}^{\alpha +1})\big)=&\bigcap _{j\geq n}\{z\} \times \big(\big(\bigcup_{k\geq n} J_k\times D^{n-2}\big)\\
&\cup\big(J_{n-1}\times D^{n-4}\times\bigcup\{p_2\big(p_{j-n+2}^{-1}(A_{j-n+2}(x,\mathcal{J}^{\alpha}))\big):x\in D\} \big)\\
&\cup\big(J_{n-2}\times D^{n-5}\times \bigcup\{p_3\big(p_{j-n+3}^{-1}(A_{j-n+3}(x,\mathcal{J}^{\alpha}))\big):x\in D\}\big) \cup\ldots 
\end{align*}
\begin{align*}
&\cup \big(J_3\times\bigcup\{p_{n-2}\big(p_{j-2}^{-1}(A_{j-2}(x,\mathcal{J}^{\alpha}))\big):x\in D\}\big)\\
&\cup \bigcup\{p_{n-1}\big(p_{j-1}^{-1}(A_{j-1}(x,\mathcal{J}^{\alpha}))\big):x\in J_2\}\big)\\
=&\{z\} \times \big(\big(\bigcup_{k\geq n} J_k\times D^{n-2}\big)\\
&\cup\big(J_{n-1}\times D^{n-4}\times \bigcup\{B_2 ^0(\mathcal{A}(x,\mathcal{J}^{\alpha})):x\in D\}\big)\\
&\cup\big(J_{n-2}\times D^{n-5}\times \bigcup\{B_3 ^0(\mathcal{A}(x,\mathcal{J}^{\alpha})):x\in D\}\big) \cup\ldots \\
&\cup \big(J_3\times \bigcup\{B_{n-2} ^0(\mathcal{A}(x,\mathcal{J}^{\alpha})):x\in D\} \big)\\
&\cup \bigcup\{B_{n-1} ^0(\mathcal{A}(x,\mathcal{J}^{\alpha})):x\in J_2\}\big)\\
B_{n+1} ^0(\mathcal{A}(z, \mathcal{J}^{\alpha +1})\big)=&\bigcap _{j\geq n+1}\{z\} \times \big(\big(\bigcup_{k\geq n+1} J_k\times D^{n-1}\big)\\
&\cup\big(J_{n}\times D^{n-3}\times\bigcup\{p_2\big(p_{j-n+1}^{-1}(A_{j-n+1}(x,\mathcal{J}^{\alpha}))\big):x\in D\} \big)\\
&\cup\big(J_{n-1}\times D^{n-4}\times \bigcup\{p_3\big(p_{j-n+2}^{-1}(A_{j-n+2}(x,\mathcal{J}^{\alpha}))\big):x\in D\}\big) \cup\ldots \\
&\cup \big(J_3\times \bigcup\{p_{n-1}\big(p_{j-2}^{-1}(A_{j-2}(x,\mathcal{J}^{\alpha}))\big):x\in D\}\big)\\
&\cup \bigcup\{p_{n}\big(p_{j-1}^{-1}(A_{j-1}(x,\mathcal{J}^{\alpha}))\big):x\in J_2\}\big)\\
=&\{z\} \times \big(\big(\bigcup_{k\geq n+1} J_k\times D^{n-1}\big)\\
&\cup\big(J_{n}\times D^{n-3}\times \bigcup\{B_2 ^0(\mathcal{A}(x,\mathcal{J}^{\alpha})):x\in D\}\big)\\
&\cup\big(J_{n-1}\times D^{n-4}\times \bigcup\{B_3 ^0(\mathcal{A}(x,\mathcal{J}^{\alpha})):x\in D\}\big) \cup\ldots \\
&\cup \big(J_3\times \bigcup\{B_{n-1} ^0(\mathcal{A}(x,\mathcal{J}^{\alpha})):x\in D\} \big)\\
&\cup \bigcup\{B_{n} ^0(\mathcal{A}(x,\mathcal{J}^{\alpha})):x\in J_2\}\big)\\
&\vdots
\end{align*}

\noindent Thus, by induction, we get that for every $\beta <\alpha$:

\begin{align*}
B_1^\beta(\mathcal{A}(z, \mathcal{J}^{\alpha +1}))=&\{z\}\\
B_2 ^\beta(\mathcal{A}(z, \mathcal{J}^{\alpha +1}))=&\{z\}  \times \bigcup_{k\geq 2} J_k\\
B_3 ^\beta(\mathcal{A}(z, \mathcal{J}^{\alpha +1}))=&\{z\} \times\big(\big(\bigcup_{k\geq 3} J_k\times D\big)\\
&\cup \bigcup\{ B_2 ^\beta(\mathcal{A}(x,\mathcal{J}^{\alpha})):x\in J_2\}\big)\\
B_4 ^\beta(\mathcal{A}(z, \mathcal{J}^{\alpha +1}))=& \{z\}\times \big(\big(\bigcup_{k\geq 4} J_k\times D^2\big)\\
&\cup \big(J_3\times \bigcup\{ B_2 ^\beta (\mathcal{A}(x,\mathcal{J}^{\alpha})):x\in D\}\big)
\end{align*}
\begin{align*}
&\cup \bigcup\{ B^{\beta}_3(\mathcal{A}(x,\mathcal{J}^{\alpha})):x\in J_2\}\big)\\
&\vdots\\
B_n ^\beta(\mathcal{A}(z, \mathcal{J}^{\alpha +1})\big)=&\{z\} \times \big(\big(\bigcup_{k\geq n} J_k\times D^{n-2}\big)\\
&\cup\big(J_{n-1}\times D^{n-4}\times \bigcup\{B_2 ^\beta(\mathcal{A}(x,\mathcal{J}^{\alpha})):x\in D\}\big)\\
&\cup\big(J_{n-2}\times D^{n-5}\times \bigcup\{B_3 ^\beta(\mathcal{A}(x,\mathcal{J}^{\alpha})):x\in D\}\big)\cup\ldots \\
&\cup \big(J_3\times \bigcup\{B_{n-2} ^\beta(\mathcal{A}(x,\mathcal{J}^{\alpha})):x\in D\} \big)\\
&\cup \bigcup\{B_{n-1} ^\beta(\mathcal{A}(x,\mathcal{J}^{\alpha})):x\in J_2\}\big)
\end{align*}
\begin{align*}
B_{n+1} ^\beta(\mathcal{A}(z, \mathcal{J}^{\alpha +1})\big)=&\{z\} \times \big(\big(\bigcup_{k\geq n+1} J_k\times D^{n-1}\big)\\
&\cup\big(J_{n}\times D^{n-3}\times \bigcup\{B_2 ^\beta(\mathcal{A}(x,\mathcal{J}^{\alpha})):x\in D\}\big)\\
&\cup\big(J_{n-1}\times D^{n-4}\times \bigcup\{B_3 ^\beta(\mathcal{A}(x,\mathcal{J}^{\alpha})):x\in D\}\big) \cup\ldots \\
&\cup \big(J_3\times \bigcup\{B_{n-1} ^\beta(\mathcal{A}(x,\mathcal{J}^{\alpha})):x\in D\} \big)\\
&\cup \bigcup\{B_{n} ^\beta(\mathcal{A}(x,\mathcal{J}^{\alpha})):x\in J_2\}\big)\\
&\vdots
\end{align*}

\noindent For $\beta =\alpha$, by hypothesis $B_{n}^{\alpha}(\mathcal{A}(x, \mathcal{J}^{\alpha +1}))=\emptyset$ for every $n\geq 2$, hence:

\begin{align*}
B_1^\alpha (\mathcal{A}(z, \mathcal{J}^{\alpha +1}))=&\{z\}\\
B_2 ^\alpha(\mathcal{A}(z, \mathcal{J}^{\alpha +1}))=&\{z\}  \times \bigcup_{k\geq 2} J_k\\
B_3 ^\alpha(\mathcal{A}(z, \mathcal{J}^{\alpha +1}))=&\{z\} \times \bigcup_{k\geq 3} J_k\times D\\
B_4 ^\alpha(\mathcal{A}(z, \mathcal{J}^{\alpha +1}))=& \{z\}\times \bigcup_{k\geq 4} J_k\times D^2\\
&\vdots\\
B_n ^\alpha(\mathcal{A}(z, \mathcal{J}^{\alpha +1})\big)=&\{z\} \times \bigcup_{k\geq n} J_k\times D^{n-2}\\
B_{n+1} ^\alpha(\mathcal{A}(z, \mathcal{J}^{\alpha +1})\big)=&\{z\} \times \bigcup_{k\geq {n+1}} J_k\times D^{n-1}\\
&\vdots
\end{align*}

It is immediate, by the characteristics of the family of sets $\mathcal{J}$, that 

$$B^{\alpha+1}_{1}(\mathcal{A}(z,\mathcal{J}^{\alpha +1}))=\{z\}\text{ and }$$ $$B^{\alpha+1}_{n}(\mathcal{A}(z,\mathcal{J}^{\alpha +1}))=\emptyset \text{ for every }n\geq 2.$$ 

\noindent Hence we conclude that  $(4)$ $(5)$ and $(6)$ are satisfied by $\mathcal{A}(z,\mathcal{J}^{\alpha +1})$ and we have finished.
\end{proof}

\end{lemma}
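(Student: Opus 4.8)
The plan is to check, one by one, the six clauses of the \huge{\Bat}\Large-property for the family $\mathcal{A}(z,\mathcal{J}^{\alpha+1})$ just constructed. Clauses $(1)$, $(2)$ and $(3)$ carry no real content: $(1)$ is literally the first line of the construction, $(2)$ is a dimension count for which one only needs $A_m(x,\mathcal{J}^{\alpha})\subset D^m$, and $(3)$ holds because deleting the last coordinate of any block of $A_{n+1}(z,\mathcal{J}^{\alpha+1})$ lands inside the corresponding block of $A_n(z,\mathcal{J}^{\alpha+1})$, invoking clause $(3)$ for the given families $\mathcal{A}(x,\mathcal{J}^{\alpha})$. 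So the whole problem is to compute the iterated projection sets $B_n^{\beta}\big(\mathcal{A}(z,\mathcal{J}^{\alpha+1})\big)$ for all $\beta\le\alpha+1$; clauses $(4)$, $(5)$ and $(6)$ will then fall out.

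The mechanism I would use is a self-similarity property of the operator that sends $\mathcal{A}=\{A_n\}_n$ to $\big\{\bigcap_{k\ge n}p_n(p_k^{-1}(A_k))\big\}_n$. First I would prove that
\[
B_n^{0}\big(\mathcal{A}(z,\mathcal{J}^{\alpha+1})\big)=\bigcap_{j\ge n}p_n\big(p_j^{-1}(A_j(z,\mathcal{J}^{\alpha+1}))\big)
\]
is precisely the set obtained from the formula for $A_n(z,\mathcal{J}^{\alpha+1})$ by replacing, inside every block, each $A_m(x,\mathcal{J}^{\alpha})$ by $B_m^{0}\big(\mathcal{A}(x,\mathcal{J}^{\alpha})\big)$. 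This works because clause $(3)$ makes the sets $p_n(p_j^{-1}(A_j(z,\mathcal{J}^{\alpha+1})))$ decrease with $j$, so the intersection may be taken block by block; in the block attached to $J_k$ the last coordinates range over $A_{j-k+1}(x,\mathcal{J}^{\alpha})$, and intersecting over $j$ reproduces exactly $B^{0}_{n-k+1}\big(\mathcal{A}(x,\mathcal{J}^{\alpha})\big)$ by the very definition of $B^{0}$, while the leading piece $\bigcup_{k\ge j}J_k\times D^{j-2}$ of $A_j$ contributes nothing beyond $\bigcup_{k\ge n}J_k\times D^{n-2}$, since a fixed $J_k$ with $k\ge n$ survives every projection. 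Then I would check that the derived family $\{B_n^{0}(\mathcal{A}(z,\mathcal{J}^{\alpha+1}))\}_n$ again satisfies a consistency relation of the same type as clause $(3)$, so that the same argument can be fed into itself; by transfinite induction on $\beta\le\alpha$ this gives that $B_n^{\beta}\big(\mathcal{A}(z,\mathcal{J}^{\alpha+1})\big)$ has the block shape of $A_n(z,\mathcal{J}^{\alpha+1})$ with each $A_m(x,\mathcal{J}^{\alpha})$ replaced by $B_m^{\beta}\big(\mathcal{A}(x,\mathcal{J}^{\alpha})\big)$, and $B_1^{\beta}\big(\mathcal{A}(z,\mathcal{J}^{\alpha+1})\big)=\{z\}$.

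From this description the three outstanding clauses are pure bookkeeping. For $\beta<\alpha$ every block of $B_n^{\beta}(\mathcal{A}(z,\mathcal{J}^{\alpha+1}))$ is a product of sets of cardinality $|D|$ --- the $J_k$ are such by hypothesis, and so are the $B_m^{\beta}(\mathcal{A}(x,\mathcal{J}^{\alpha}))$ with $m\ge 2$ by clause $(5)$ for the $\mathcal{A}(x,\mathcal{J}^{\alpha})$ --- hence $|B_n^{\beta}(\mathcal{A}(z,\mathcal{J}^{\alpha+1}))|=|D|$ for $n\ge 2$; for $\beta=\alpha$, clause $(6)$ for the $\mathcal{A}(x,\mathcal{J}^{\alpha})$ kills every interior block and leaves just $B_n^{\alpha}(\mathcal{A}(z,\mathcal{J}^{\alpha+1}))=\{z\}\times\bigcup_{k\ge n}J_k\times D^{n-2}$, still of cardinality $|D|$ when $n\ge 2$. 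Together these two facts give clause $(5)$ at level $\alpha+1$ (which is what ultimately secures clause $(2)$ of Theorem~\ref{matrix}). One final application of the projection operator, now using the \emph{pairwise disjointness} of the $J_k$, yields
\[
B_n^{\alpha+1}\big(\mathcal{A}(z,\mathcal{J}^{\alpha+1})\big)=\{z\}\times\Big(\bigcap_{j\ge n}\bigcup_{k\ge j}J_k\Big)\times D^{n-2}=\emptyset\qquad(n\ge 2),
\]
which is clause $(6)$, while $B_1^{\gamma}(\mathcal{A}(z,\mathcal{J}^{\alpha+1}))=\{z\}$ for every $\gamma\le\alpha+1$ completes clause $(4)$. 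I expect the genuinely delicate point to be the first computation: showing that the recursive block structure of $A_n(z,\mathcal{J}^{\alpha+1})$ is preserved \emph{exactly} --- not merely up to something negligible --- under $\bigcap_j p_n\circ p_j^{-1}$, which forces one to keep precise track of all the index shifts and to verify that the derived families inherit clause $(3)$, so that the induction on $\beta$ can be set in motion; after that, only cardinal arithmetic and the disjointness of the $J_k$ are needed.
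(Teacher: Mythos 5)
Your proposal is correct and takes essentially the same route as the paper: you compute $B_n^{\beta}(\mathcal{A}(z,\mathcal{J}^{\alpha+1}))$ by showing the block structure of $A_n(z,\mathcal{J}^{\alpha+1})$ is preserved under the projection operator with each interior block $A_m(x,\mathcal{J}^{\alpha})$ replaced by $B_m^{\beta}(\mathcal{A}(x,\mathcal{J}^{\alpha}))$, then invoke hypothesis $(6)$ at $\beta=\alpha$ to reduce to $\{z\}\times\bigcup_{k\geq n}J_k\times D^{n-2}$, and finish clause $(6)$ at level $\alpha+1$ via the pairwise disjointness of the $J_k$. This is exactly the paper's argument, which simply writes out the same block-by-block computations in full.
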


\begin{lemma}{\rm {\bf Limit ordinal.}} Let $\alpha<|D|$ be a limit ordinal and $\mathcal{J^\alpha}=\{J^{\beta}_n:n\in \Nset, \beta <\alpha\}$ a family of pairwise disjoint subsets of $D$ such that $|J^{\beta}_n|=|D|$. For every $\beta <\alpha$ put $\mathcal{I}^{\beta}=\{J_n ^{\beta}:n\in \Nset\}$ and suppose that the family of sets $\mathcal{A}(z,\mathcal{I}^{\beta})=\{A_n(z,\mathcal{I}^{\beta}):n\in \Nset\}$, $z\in D$, is defined satisfying the \huge{\Bat}\Large-property. Let $\mathcal{A}(z,\mathcal{J} ^{\alpha})=\{A_n (z,\mathcal{J} ^{\alpha}):n\in \Nset\}$ where

$$A_1 (z,\mathcal{J}^{\alpha})=\{z\} \text{ and}$$ $$A_n (z,\mathcal{J}^{\alpha})=\bigcup _{\beta <\alpha}A_n (z,\mathcal{I} ^{\beta}) \text{ for every }n\geq 2.$$

\noindent Then $\mathcal{A}(z,\mathcal{J} ^{\alpha})$ satisfies the \huge{\Bat}\Large -property.

\begin{proof} As in Lemma 1 we just prove that  $\mathcal{A}(z,\mathcal{J} ^{\alpha})$ satisfies properties $(4)$, $(5)$ and $(6)$ of  the \huge{\Bat}\Large-property.

Fixed $\beta \leq\alpha$, since $J^{\gamma} _{n}\cap J^{\lambda} _{k}=\emptyset$ if $\gamma \neq \lambda$, then 

$$B_{n}^{\beta}(\mathcal{A}(z,\mathcal{J} ^{\alpha}))=\bigcap _{\lambda <\beta}\bigcap _{k\geq n} p_{n}(p_{k}^{-1}(B_{k}^{\lambda}(\mathcal{A}(z,\mathcal{J} ^{\alpha}))))=$$ $$ \bigcap _{\lambda <\beta}\bigcap _{k\geq n}\bigcup_{\gamma<\alpha}p_{n}(p_{k}^{-1}(B_{k}^{\lambda}(\mathcal{A}(z,\mathcal{I}^{\gamma}))))=\bigcup_{\beta<\gamma <\alpha} B_{n}^{\beta}(\mathcal{A}(z,\mathcal{I}^{\gamma})) .$$ 

\noindent Hence $B_{1}^{\beta}(\mathcal{A}(z,\mathcal{J}^\alpha))=\{z\}$ and  

$$B_{n}^{\beta}(\mathcal{A}(z,\mathcal{J}^{\alpha}))=\bigcup_{\beta<\gamma <\alpha} B_{n}^{\beta}(\mathcal{A}(z,\mathcal{I}^{\gamma}))\text{ for every }n\geq 2.$$ Therefore, $B_{n}^{\beta}(\mathcal{A}(z,\mathcal{J}^{\alpha}))\neq \emptyset$ for every $\beta <\alpha$ and every $n\in \Nset$ because $\alpha$ is a limit ordinal. Moreover,  $B_{n}^{\alpha}(\mathcal{A}(z,\mathcal{J}^{\alpha}))= \emptyset$. Then $(4)$, $(5)$ and $(6)$ are satisfied.

\end{proof}
\end{lemma}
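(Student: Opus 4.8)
Since $A_1(z,\mathcal{J}^\alpha)=\{z\}$ and $A_n(z,\mathcal{J}^\alpha)=\bigcup_{\beta<\alpha}A_n(z,\mathcal{I}^\beta)$, conditions $(1)$, $(2)$ and $(3)$ of the \Bat-property transfer to $\mathcal{A}(z,\mathcal{J}^\alpha)$ immediately: $(1)$ and $(2)$ are clear, and $(3)$ holds because $p_n\circ p_{n+1}^{-1}$ commutes with arbitrary unions and each $\mathcal{A}(z,\mathcal{I}^\beta)$ already satisfies $(3)$. So, exactly as in the successor lemma, the content is $(4)$, $(5)$ and $(6)$. The plan is to reduce all three to a single identity, proved by transfinite induction on $\beta\le\alpha$, simultaneously for all $n$: namely $B_1^\beta(\mathcal{A}(z,\mathcal{J}^\alpha))=\{z\}$, and for $n\ge2$, $B_n^\beta(\mathcal{A}(z,\mathcal{J}^\alpha))=\bigcup_{\gamma<\alpha}B_n^\beta(\mathcal{A}(z,\mathcal{I}^\gamma))$. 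Note that in the latter the terms with $\gamma\le\beta$ vanish: since $B_n^{(\cdot)}$ is decreasing in its superscript, $B_n^\beta(\mathcal{A}(z,\mathcal{I}^\gamma))\subseteq B_n^\gamma(\mathcal{A}(z,\mathcal{I}^\gamma))=\emptyset$ by $(6)$ for $\mathcal{A}(z,\mathcal{I}^\gamma)$, so the union is really over $\beta<\gamma<\alpha$, matching the statement.

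The engine of the induction, and the step I expect to be the crux, is a disjoint-support observation. For $n\ge2$ every point of $A_n(z,\mathcal{I}^\gamma)$ has its second coordinate in $S^\gamma:=\bigcup_{k\ge2}J_k^\gamma$ (this is forced by $A_2(z,\mathcal{I}^\gamma)=\{z\}\times S^\gamma$ together with property $(3)$), and the same then holds for $B_n^\lambda(\mathcal{A}(z,\mathcal{I}^\gamma))\subseteq A_n(z,\mathcal{I}^\gamma)$; moreover the $S^\gamma$ are pairwise disjoint because $\mathcal{J}^\alpha$ is a pairwise disjoint family. Granting the inductive hypothesis at all $\lambda<\beta$, I would expand $B_n^\beta(\mathcal{A}(z,\mathcal{J}^\alpha))=\bigcap_{\lambda<\beta}\bigcap_{k\ge n}p_n(p_k^{-1}(B_k^\lambda(\mathcal{A}(z,\mathcal{J}^\alpha))))$, substitute the union over $\gamma$, and push $p_n\circ p_k^{-1}$ through it. Because the second coordinate of a point $w\in D^n$ ($n\ge2$) lies in at most one $S^\gamma$, membership of $w$ in the intersection forces a single $\gamma$ to work for all $\lambda$ and all $k$; hence the intersection of the unions equals the union of the intersections, $\bigcup_{\gamma<\alpha}\bigcap_{\lambda<\beta}\bigcap_{k\ge n}p_n(p_k^{-1}(B_k^\lambda(\mathcal{A}(z,\mathcal{I}^\gamma))))=\bigcup_{\gamma<\alpha}B_n^\beta(\mathcal{A}(z,\mathcal{I}^\gamma))$. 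This swap is valid \emph{only} because of the disjointness of the $S^\gamma$ — in general an intersection of unions strictly contains the union of intersections — which is precisely where the hypothesis ``$J_n^\gamma\cap J_k^\lambda=\emptyset$'' is used.

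The case $n=1$ is separate and easier. Here every $A_k$ and every $B_k^\lambda$ in sight is nonempty with first coordinate $z$: nonemptiness for $k\ge2$ follows from the $n\ge2$ clause of the inductive hypothesis together with property $(5)$ applied to some $\mathcal{A}(z,\mathcal{I}^\gamma)$ with $\lambda<\gamma<\alpha$ (such $\gamma$ exists since $\alpha$ is a limit), so each $p_1(p_k^{-1}(\cdot))$ equals $\{z\}$ and therefore $B_1^\beta(\mathcal{A}(z,\mathcal{J}^\alpha))=\{z\}$. The base case $\beta=0$ is the same computation with the $A_k$ in place of the $B_k^\lambda$.

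With the identity established for all $\beta\le\alpha$, conditions $(4)$–$(6)$ fall out. Property $(4)$ is exactly the $n=1$ clause. For $(5)$, fix $n\ge2$ and $\beta<\alpha$; since $\alpha$ is a limit there is $\gamma$ with $\beta<\gamma<\alpha$, and property $(5)$ for $\mathcal{A}(z,\mathcal{I}^\gamma)$ gives $|B_n^\beta(\mathcal{A}(z,\mathcal{I}^\gamma))|=|D|$, so the union has cardinality $|D|$, being squeezed between $|D|$ and $|D^n|=|D|$. For $(6)$, take $\beta=\alpha$: each term $B_n^\alpha(\mathcal{A}(z,\mathcal{I}^\gamma))$ with $\gamma<\alpha$ is empty by the monotonicity-plus-$(6)$ remark of the first paragraph, whence $B_n^\alpha(\mathcal{A}(z,\mathcal{J}^\alpha))=\emptyset$ for $n\ge2$. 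The one point to handle with care is that the identity must be run all the way up to $\beta=\alpha$ (that is what delivers $(6)$), and that at $n=1,\ \beta=\alpha$ one must read off $\{z\}$ from the $n=1$ clause directly rather than from the union, which for $\gamma<\alpha$ would contribute only empty sets.
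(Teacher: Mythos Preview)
Your argument is correct and is exactly the paper's approach spelled out in detail: both establish the identity $B_n^\beta(\mathcal{A}(z,\mathcal{J}^\alpha))=\bigcup_{\beta<\gamma<\alpha}B_n^\beta(\mathcal{A}(z,\mathcal{I}^\gamma))$ for $n\ge2$ from the pairwise disjointness of the $J^\gamma_n$ and then read off $(4)$--$(6)$, with you making explicit the transfinite induction on $\beta$, the $n=1$ case, and the ``disjoint second coordinate'' mechanism that legitimizes swapping the intersection over $\lambda,k$ with the union over $\gamma$. One small caveat, shared with the paper's own proof: the assertion that every point of $A_n(z,\mathcal{I}^\gamma)$ ($n\ge2$) has second coordinate in $S^\gamma$ is not formally part of the \Bat-property as listed but rather a feature of the concrete recursion (base case and successor lemma); in context this is harmless, since the lemma is only ever applied to families produced by that recursion.
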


\begin{remark} Observe that in particular, the above lemma can also be   applied whenever $\alpha $ is a limit ordinal such that for some $\beta <\alpha$, $\beta$ is a limit ordinal too. In this case, since $|\bigcup _{n\in \Nset}J^\beta _n|=|D|$ we can arrange the family of subsets $\mathcal{I}^{\beta}=\{J^\beta _n:n\in \Nset\}$ by doing partitions on the sets $J^\beta _n$, in such a way that, after the partitions, $\mathcal{I}^{\beta}=\{{J'} ^\gamma _n:n\in \Nset, \gamma <\beta\}$  and $|{J'} ^\gamma _n|=|D|$ for every $n\in \Nset$ and every $\gamma <\beta$. 
\end{remark}

Next we prove Theorem \ref{matrix}.

\begin{proof}[Proof of Theorem \ref{matrix}] 

We are going to define the $z_u$-sets $F_n$, $n\in \Nset$.

Let $\mathcal{J}=\{J_{n}:n\in \Nset\}$ any family of pairwise disjoint subsets of $D$ such that $|J_{n}|= |D|$ for every $n\in \Nset$. Next, for every $\alpha <|D|$ limit ordinal let $\mathcal{I}^{\alpha} =\{I^\beta _n:n\in \Nset, \beta <\alpha\}$ be any family of pairwise disjoint subsets of $D$ such that $|I^{\beta}_n|=|D|$ for every $n\in \Nset$ and every $\beta <\alpha$.

We start by $\alpha =0$. Put $\mathcal{J}^0=\mathcal{J}$ and for every $x\in D$ define  $\mathcal{A}(x, \mathcal{J}^0)=\{A_n(x, \mathcal{J}^0):n\in \Nset\}$  where  
$$A_1(x, \mathcal{J}^0)=\{x\} \text{ and}$$ $$A_n(x,\mathcal{J}^0)=\{x\}\times \bigcup _{k\geq n} J^0 _n \times D^{n-2} \text{ for every } n\geq2.$$ By the characteristics of $\mathcal{J}^0$,   $\mathcal{A}(x, \mathcal{J}^0)$ satisfies the \huge{\Bat}\Large-property. 

Next, fix $\alpha <|D|$ and suppose that for every $x\in X$, every $\beta <\alpha$, and every $\mathcal{J}^\beta$ satisfying the hypothesis of Lemma 1 or Lemma 2, the family of sets $\mathcal{A}(x, \mathcal{J}^{\beta})=\{A_n(x, \mathcal{J}^\beta):n\in \Nset\}$ has been defined in such a way that $\mathcal{A}(x, \mathcal{J}^{\beta})$ satisfies the \huge{\Bat}\Large-property. If $\alpha$ is a successor ordinal for some $\beta$, that is $\alpha=\beta +1$, then we let $\mathcal{J}^{\alpha}=\mathcal{J}$ and we define $\mathcal{A}(x, \mathcal{J}^\alpha)$ applying Lemma 1. Otherwise, if $\alpha$ is a limit ordinal, we put $\mathcal{J}^\alpha=\mathcal{I}^\alpha$ and we define $\mathcal{A}(x, \mathcal{J}^\alpha)$ applying Lemma 2.
\smallskip

Now, let  $\mathcal{P}=\{P_n:n\in \Nset\}$ be a partition of $D$ such that $|P_n|=|D|$. Then if $\alpha$ is a successor ordinal, we let $\mathcal{P}^{\alpha}=\mathcal{P}$. Otherwise, we choose a partition $\mathcal{P}^{\alpha}=\{P^\beta _n:n\in \Nset, \beta <\alpha\}$ of $D$ such that $|\mathcal{P}^{\beta}_n|=|D|$ for every $n\in \Nset$ and every $\beta <\alpha$.

Next,  well-order $D$, that is, $D= \{x_{\alpha}:\alpha <|D|\}$. Then, for every $\alpha <|D|$ we take the families of sets $\mathcal{A}(x_{\alpha},\mathcal{P}^{\alpha})=\{A_n(x_{\alpha},\mathcal{P}^{\alpha}):n\in \Nset\}$ and for every $n\in \Nset$ we put $$A_n=\bigcup_{\alpha<|D|} A_n(x_{\alpha},\mathcal{P}^{\alpha})$$ and $$F_n=p_{n}^{-1}(A_n).$$ Clearly $\{F_n:n\in \Nset\}$ is a dcreasing family of $z_u$-sets.

Moreover, let $\mathcal{A}=\{A_n:n\in \Nset\}$. Since each family of sets $\mathcal{A}(x_{\alpha}, \mathcal{P}^{\alpha})$ satisfies the  \huge{\Bat}\Large-property then, it is clear that  $$B^{\alpha}_1(\mathcal{A})= \bigcup_{\beta \geq\alpha, \beta <|D|} B^{\alpha}_1(\mathcal{A}(x_{\beta},\mathcal{P}^{\beta}))=D\backslash\{x_{\beta}:\beta <\alpha\},$$
$$B^{\alpha}_n(\mathcal{A})= \bigcup_{\beta >\alpha, \beta <|D|} B^{\alpha}_n(\mathcal{A}(x_{\beta},\mathcal{P}^{\beta})) \text{ and then}$$ $$ \bigcap _{\alpha <|D|}B_{n}^{\alpha}(\mathcal{A})=\emptyset.$$ Hence, conditions $(1)$, $(2)$ and $(3)$ are also satisfied.
\smallskip

Finally, assume that $\bigcap_{n\in \Nset} F_n\neq\emptyset$, that is, there exists some point $$\langle z_1,z_2,...,z_n,...\rangle\in \bigcap_{n\in \Nset} F_n.$$ Then, it is easy to see that $\langle z_1,...,z_n\rangle\in B^\alpha_{n}(\mathcal{A})$ for every $\alpha <|D|$ and every $n\in \Nset$. Thus, $$\langle z_1,....,z_n\rangle\in \bigcap_{\alpha <|D|}B^\alpha_{n}(\mathcal{A}).=\emptyset$$ which is a contradiction. Therefore, $\bigcap_{n\in \Nset} F_n=\emptyset$.

\end{proof}

Now we are ready to prove Theorem \ref{Domegametric} and Theorem \ref{Domega}.

\begin{proof}[Proof of Theorem \ref{Domegametric}]

By Theorem \ref{matrix} there exists a countable family of $z_u$-sets  $\{F_n:n \in \Nset\}$ of $D^{\omega_0} $ such that $\bigcap_{n\in \Nset} F_n=\emptyset$ and such that, for every $n\in \Nset$, the family of projections $\{p_n(F_k):k\in \Nset\}$ belongs to the filter-base $\{B_n ^{\alpha}(\mathcal{A}):\alpha <|D|\}$ inducing a $|D|$-complete filter $\mathcal{B}_n$ of $D^{n}$. Moreover we have that for every $n\in \Nset$, $p_{n}(p_{n+1}^{-1}(\mathcal{B}_{n+1}))\subset \mathcal{B}_n$ for every $n\in \Nset$.

Next, since each set $D^{n}$ has $\omega_1$-strongly compact cardinal $|D|$, the $|D|$-complete filter $\mathcal{B}_n$ can be extended to a $\omega _1$-complete ultrafilter $\mathcal{U}_n$ of $D^n$. Then the preimages $\mathcal{F}=\bigcup\{p_{n}^{-1}(\mathcal{U}_n):n\in \Nset\}$ form a filter of $D^{\omega _0}$ because $p_{n}(p_{n+1}^{-1}(\mathcal{B}_{n+1}))\subset \mathcal{B}_n$ for every $n\in \Nset$. Moreover, as we have said previously, $\mathcal{F}$ is a Cauchy $z_u$-filter of $(D^{\omega _0}, e\pi)$ which in particular fails the $\omega_1$-intersection property as $F_n\in \mathcal{F}$ for every $n\in \Nset$.
\end{proof}

\begin{proof}[Proof of Theorem \ref{Domega}] 

Let $\kappa \geq\omega _0$ and $D$ a set of $\omega_1$-strongly compact cardinal.

First, observe that the Baire space $(D^{\kappa},\pi)$ contains as a uniform copy of $(D^{\omega _0},\pi)$, precisely, the subspace $Y=\prod_{\alpha <\kappa} Y_{\alpha}$ where $Y_\alpha =D$ for every $\alpha <\omega _0$ and $Y_{\alpha}=\{x\}$ for every $\alpha \geq \omega _0$, $\alpha <\kappa$, where $x$ is a fixed point of $D$.  Then, the inclusion map $i:(Y,\pi|_Y)\rightarrow (D^{\kappa},\pi)$ is uniformly continuous. In particular the inclusion map $i:(Y,e(\pi|_Y))\rightarrow (D^{\kappa},e\pi)$ is also uniformly continuous and hence every Cauchy filter of $(Y,e(\pi|_Y))$ is also a Cauchy filter of $(D^{\kappa},e\pi)$.
 
Next, by Theorem \ref{Domegametric} and Theorem \ref{matrix} there exists a Cauchy $z_u$-filter $\mathcal{F}$ on $(Y,e(\pi|_Y))$ containing $z_u$-sets $F_n\in \mathcal{F}$, $n\in \Nset$, of the form $F_n=p_n^{-1}(A_n)$, where $A_n \subset D^n$, which satisfy that $\bigcap F_n=\emptyset$. Then $\mathcal{F}$ is also a Cauchy filter of $(D^{\kappa},e\pi)$. Let us project the filter $\mathcal{F}$ onto each set $D^{N}$ where $N$ is a finite set of $\{\alpha:\alpha <\kappa\}$. Then the preimages of this projections are a Cauchy $z_u$-filter $\mathcal{F}'$ of $(D^{\kappa},e\pi)$. Moreover the $z_u$-sets $F'_n$  given by the primages on $D^{\kappa}$ of the sets $A_n$, $n\in \Nset$, belongs to $\mathcal{F}'$ and satisfies that $\bigcap_{n\in \Nset} F'_n=\emptyset$, that is, $\mathcal{F}'$ fails the $\omega_1$-intersection property.

\end{proof}

\begin{remark} In Theorem \ref{Domegametric} and in Theorem \ref{Domega} we have more precisely proved that there exists a point $\xi$ in the Samuel realcompactification of $(D^{\kappa},e\pi)$ which does not belong to the $G_\delta$-closure of $D^{\kappa}$ in its Samuel compactification $s_\pi D^k$. It is exactly the convergence point of the Cauchy $z_u$-ultrafilter of $(D^\kappa, e\pi)$ failing the $\omega_1$-intersection property. Moreover, we can assure there are infinitely-many points like this lying in the remainder. Indeed, we can apply Theorem \ref{matrix} to any set $E\subset D$ of cardinal $|E|=|D|$ from an infinite partition of $D$.

\end{remark}

\begin{remark} The above results Theorem \ref{Cauchy}, Theorem \ref{Domegametric} and Theorem \ref{Domega}  cannot be stated for Cauchy filters which are not $z_u$-filters. For instance, if $D$ is a set of cardinal at least two there exists a Cauchy filter on $(D^{\omega},e\pi)$ failing the $\omega _1$-intersection property. To show that take a point $x\in D^{\omega _0}$ and sets of the form $A_n=p^{-1}_n(p_n(x))$, $n\in \Nset$. Then, the sets $F_n=A_n\backslash \{x\}$ form a subbase of a filter $\mathcal{F}$ of $D^{\omega _0}$ failing the $\omega_1$-intersection property. By completeness of $(D^{\omega _0},\pi)$, $\mathcal{F}$ is a Cauchy filter of $(D^{\omega _0},\pi)$ because it converges to $x$. In particular, it is also Cauchy for $(D^{\omega _0},e\pi)$ as the uniformity $e\pi$ is weaker than $\pi$. 
\end{remark}

\end{section}

\begin{section}{Final remarks}

In this paper we have proved that the following implications are satisfied: $$\text{the cardinal of }D \text{ is }\omega _1\text{-strongly compact }$$ $$\Downarrow$$ \begin{center}
there exists Cauchy $z_u$-filter in $(D^\kappa,e\pi)$, $\kappa \geq \omega _0$ failing the $\omega _1$-intersection property 
\end{center}
$$\Downarrow$$
$$\text{the cardinal of }D \text{ is Ulam-measurable }$$

Now, we ask which of the above implications can be reversed. A first answer could be the following. Observe that in \cite{magidor} (see \cite{bagaria1} and \cite{bagaria2}), it is shown  that, assuming the consistency of ZFC together with the large-cardinal axiom `` there exists and $\omega_1$-strongly compact cardinal'', then it is also consistent with ZFC that the first $\omega_1$-strongly compact cardinal is the first Ulam-measurable cardinal. If this is so, we have that the above implications are equivalences.

However, we also have a different situation. Indeed, in \cite{bagaria1} (see also \cite{gitik}) it is proved that, assuming the consistency of ZFC with a stronger large-cardinal axiom that states that  ``there exists a {\it supercompact cardinal}'', then it is also consistent ZFC together with the fact that the first $\omega_1$-strongly compact  cardinal is strictly grater than the first  Ulam-measurable cardinal.

In this case, the requirement in Theorem \ref{Domega} that the cardinal of $D$ is $\omega _1$-strongly compact  could bee too strong. Indeed, looking into the proof of Theorem \ref{Domega} and Theorem \ref{Domegametric} it is enough to ask that the cardinal of $D$ is {\it $\omega _1$-strongly measurable}, that is, on every set of cardinal $|D|$, every $|D|$-complete filter can be extended to an $\omega _1$-complete ultrafilter. This notion of $\omega_1$-strong mesurability generalizes the concept on {\it strong-measurable cardinal}  that can be found in \cite{comfort}, as $\omega_1$-strong compatness generalizes {\it strong compactness}, and we don't know if it has ever been considered. So we ask if there exists a model, assuming the consistence of ZFC with some large cardinal axiom, in which all the above implications are not reversed. Other possibility is that it is enough to work with Ulam-measurable cardinals. However, we don't have any idea of a possible proof of this fact.
\medskip

Anyway, before ending, observe that if $D$ is a set of cardinal $\kappa _1$ satisfying that $(D^{\omega _0},e\pi)$ contains a Cauchy $z_u$-filter which fails the $\omega_1$-intersection property, then, for any set $S$ of cardinal $\kappa _2 \geq \kappa _1$, the Baire space $(S^{\omega _0},e\pi)$ contains also a  Cauchy $z_u$-filter failing the $\omega_1$-intersection property. Indeed, $(D^{\omega _0},\pi)$ is a closed uniform  subspace of $(S^{\omega _0},\pi)$. Then, any $z_u$-filter of $(D^{\omega _0},\pi)$ is also a $z_u$-filter of $(S^{\omega _0},\pi)$. Moreover, the incusion map $i:(D^{\omega _0},e\pi)\rightarrow (S^{\omega _0},e\pi)$ is uniformly continuous. Therefore, if $\mathcal{F}$ is a Cauchy $z_u$-ultrafilter of $(D^{\omega _0},e\pi)$ failing the $\omega_1$-intersection property, the same ultrafilter works for $(S^{\omega _0},e\pi)$.

\bigskip

\bigskip

\noindent {\it Acknowledgments}. I would like to thank Prof. M. Hu\v{s}ek for his useful comments and remarks.

\end{section}

\newpage


\begin{thebibliography}{}
\large
\bibitem{bagaria1} J. Bagaria and M. Magidor, {\it Groups radicals and strongly compact cardinals}, Trans. AMS 366 (2014) 1875-1877.
\bibitem{bagaria2} J. Bagaria and M. Magidor, {\it On $\omega _1$-strongly compact cardinals}, Jour. Symbolic Logic 79 (2014) 266-278.
\bibitem{borsik} J. Bors\'ik, \textit{Mappings preserving Cauchy nets}, Tatra Mt. Math. Publ 19 (2000) 63-73. 
\bibitem{bourbaki} N. Bourbaki, {\it Elements of Mathematics, General Topology, Part 1}, Hermann, Paris 1966.
\bibitem{chekeev} A. A. Chekeev, {\it Uniformitites for Wallman compactifications and realcompactifications}, Topology Appl. 201 (2016) 145-156.
\bibitem{comfort} W.W. Comfort and S. Negrepontis, {\it The Theory of Ultrafilters}, Springer Verlag, Berlin Publishing New York, 1974.
\bibitem{curzer}, H. Curzer, A. W. Hager, {\it Zero-set Ultrafilters and $G_{\delta}$-closures in uniform spaces} Journ. Austr. Math. Soc. 28 (1979) 219-228.
\bibitem{engelking.realcompact} R. Engelking, \textit{Remarks on real-compact spaces}, Fund. Math. 55  (1964), 303-308.
\bibitem{engelkingbook} R. Engelking, \textit{General Topology}, Heldermann Verlag Berlin 1989.
\bibitem{garrido-montalvo} I. Garrido and F. Montalvo, \textit{Countable Covers and Uniform Closure}, Rend. Istit. Mat. Univ. Trieste Suppl. XXX (1999) 91-102. 
\bibitem{garrido1} M. I. Garrido and J. A. Jaramillo, \textit{Homomorphism on Function Lattices}, Monatsh. Math. 141 (2004), 127-146.
\bibitem{garrido.algebras} M. I. Garrido and F. Montalvo, {\it Generation of Uniformly Closed Algebras
of Functions}, Positivity 9  (2005) 81-95.
\bibitem{merono.Samuel.metric} M. I. Garrido and Ana S. Mero\~no {\it The Samuel realcompactification of a metric space}, J. Math. Ann. Appl. (2017) 1013-1039.
\bibitem{merono.Samuel.uniform} M. I. Garrido and Ana S. Mero\~no {\it The Samuel realcompactification}, Top. and Appl. (2018), 150-161.
\bibitem{gillman} L. Gillam and M. Jerison {\it Rings of Continuous Functions}, Springer 1960. The university series in higher mathematics.
\bibitem{ginsburg} S. Ginsburg and J. R. Isbell, \textit{Some operators on uniform spaces}, Trans. AMS 93 (1959) 145-168.
\bibitem{gitik} M. Gitik, \textit{On $\sigma$-complete uniform ultrafilters}, manuscript (2018).
\bibitem{hager-johnson} A. W. Hager, D. J. Johnson, \textit{A note on certain subalgebras of $C(X)$} Can. J. Math. 20 (1968) 389-393.
\bibitem{hager1}A. W. Hager, \textit{On inverse-closed subalgebras of $C(X)$}, Proc. London Math. Soc. 19 (1969) 233-257.
\bibitem{husek.pulga} M. Hu\v{s}ek and A. Pulgar\'{i}n, {\it Banach-Stone-like theorems for lattices of uniformly continuous functions}, Quaest. Math. 35 (2012) 417-430. 
\bibitem{husek} M. Hu\v{s}ek, \textit{Variations of Uniform completeness related to realcompactness}. Comment. Math. Univ. Car. 58 (2017)  501-518.
\bibitem{jech} T. Jech, {\it Set Theory, The Third Millenium Edition revised ans expanded}, Springer-Verlag Heidelberg 2003.
\bibitem{magidor} M. Magidor, {\it How large is the first strongly compact caridnal? or: A study on identity crisis} Annals Math. Logic 10 (1976) 33-57. 
\bibitem{samuel} P. Samuel, {\it Ultrafilters and compactifications of Uniform spaces}, trans. Amer. Math. Soc. 64 (1948) 100-132.
\bibitem{redlin} L. Redlin and S. Watson, {\it Structure spaces for rings of continuous functions with applications to realcompactifications}, Fund. Math. 152 (1997) 151-163.
\bibitem{reynolds} G. D. Reynolds and M. D Rice, {\it Completeness and covering properties of uniform spaces}, Quart. J. Math. Oxford 29 (1978), 367-374. 
\bibitem{usuba} T. Usuba, $G_{\delta}${\it -topology and compact cardinals}, Arxiv:1709.07991v3.
\bibitem{willard} S. Willard, {\it Generald Topology}, Dover Publications 2004.


\end{thebibliography}
\end{document}